\definecolor{dullmagenta}{rgb}{0.4,0,0.4}   
\definecolor{darkblue}{rgb}{0,0,0.4}
\newtheorem{theorem}{Theorem}[section]
\newtheorem{lemma}[theorem]{Lemma}
\newtheorem{prop}[theorem]{Proposition}
\newenvironment{remark}%
  {\par\medbreak\refstepcounter{theorem}%
    \noindent\textbf{Remark~\thetheorem. }}%
  {\par\medskip}
\title{Minimal Dirichlet energy partitions for graphs}
\date{\today} 
\author{Braxton Osting\thanks{Department of Mathematics, University of California, Los
Angeles, CA 90095 USA ({\tt braxton@math.ucla.edu}).}, 
Chris D. White\thanks{Department of Mathematics, University of Texas at Austin, Austin, TX 78712 USA ({\tt cwhite@math.utexas.edu})}, and
\'Edouard Oudet\thanks{LJK, Universit\'e Joseph Fourier, Grenoble, France ({\tt edouard.oudet@imag.fr}). } }
\begin{document}
\maketitle
\begin{abstract} 
Motivated by a geometric problem, we introduce a new non-convex graph partitioning  objective where the optimality criterion is given by the sum of the Dirichlet eigenvalues of the partition components.  A relaxed formulation is identified and a novel rearrangement algorithm is proposed, which we show is strictly decreasing and converges in a finite number of iterations to a local minimum of the relaxed objective function.  Our method is applied to several clustering problems on graphs constructed from synthetic data, MNIST handwritten digits, and manifold discretizations. The model has a semi-supervised extension and provides a natural representative for the clusters as well.  
\end{abstract}

\paragraph{Keywords: graph partition, clustering, graph Laplacian, Dirichlet eigenvalues, rearrangement algorithm, nonnegative matrix factorization, semi-supervised algorithm} 
 
\section{Introduction} \label{sec:intro}
Given a graph $G = (V,E)$ with non-negative edge weights $\{w_{e}\}_{e\in E}$, we consider the problem of ``optimally" partitioning  the vertex set, $V$, into $k$ subsets.  This \emph{graph partitioning problem} frequently arises in the machine learning community, where the vertices  represent observed data points and the goal is to identify meaningful groups ({\it i.e.} ``clusters'') within the data.  One difficulty arises in choosing a measure of optimality which is computable and satisfies certain desirable properties which can be application dependent.  

In this paper, we introduce a new measure of optimality for graph partitions, 
based on the sum of the Dirichlet eigenvalues of the partition components,
\begin{equation*} 
\min_{V= \amalg_{i=1}^k V_i}  \ \sum_{i=1}^k  \lambda (V_i).
\end{equation*}
Here $\lambda(V_i)$ denotes the first Dirichlet eigenvalue of the partition component $V_i$; a precise definition is given in \S \ref{sec:model}.  One benefit of the proposed objective is that it is not based on minimizing perimeter, and can thus take the interior of the partition components into account. Also, although we do \emph{not} fix the sizes of the partition components a priori, we will demonstrate that this objective prefers balanced partition component sizes. We assume that the number of partitions, $k$, is known and fixed. 

The eigenvalue partitioning problem as stated is combinatorial. To solve the problem, in \S\ref{sec:rel} we introduce a relaxation and an algorithm to solve the relaxed problem. Our algorithm was motivated by a geometric interpretation of the problem and is novel in that it does \emph{not} rely on a convex approximation or gradient descent method. We show that the algorithm is strictly decreasing and converges in a finite number of iterations to a local minimum of the relaxed objective function. In \S\ref{sec:num}, we  demonstrate with numerical examples some properties of the proposed model.  For example, it can handle arbitrary cluster shapes, which is important in most applications.  Moreover, our algorithm naturally provides confidences for label assignments. Consequently, the vertex with the highest confidence can be interpreted as a  representative for each cluster; the problem of representative-based clustering has many variants \cite{elhamifar2012,frey2007}, and is especially informative in mining image and video data.  In \S\ref{semi} we also describe a semi-supervised extension of our algorithm and apply this to the MNIST handwritten digit dataset in \S \ref{sec:num}.  Another property of our algorithm is that it can produce geometrically meaningful partitions on discretizations of manifolds; we demonstrate this ability in \S\ref{sec:num}, where we apply it to the torus and the sphere.  In both cases, there are open questions concerning the nature of these manifold partitions. 

Our model was motivated by an analogous geometric problem phrased for domains $\Omega\subset\mathbb{R}^{n}$. Indeed, a recent trend in the applied math community is to utilize models and tools derived from PDEs and geometry to motivate  analogous data-driven formulations and methods.  This results in models which are highly interpretable and have well understood properties.  Perhaps the most relevant examples of this are the use of 
Cheeger Cuts for graph partitioning  \cite{SpielmanTeng96,Shi2000,Luxburg:2007} and the use of PDE models in image processing \cite{chan2005}.  Diffusion maps, which are used to embed a dataset into a relatively low-dimensional Euclidean space 
via the eigenpairs of a diffusion operator, 
 are another PDE-based approach and have become a powerful tool for dimensionality reduction and multi-scale analysis of data sets  \cite{coifman2006,nadler2006}.  
 Functionals and flows coming from materials science (\textit{e.g.}, motion by mean curvature) have been successfully applied to image segmentation and inpainting \cite{MBO1992,MBO1993,mbo2012,garcia2013,esedoglu2013} and have inspired research into curvature flows on graphs \cite{Gennip2013}.  We believe our current model fits into this story; the analogous geometric model which inspired this work will be discussed properly in \S\ref{sec:model}. 

\paragraph{Outline.} 
In \S \ref{sec:model}, we introduce the eigenvalue partitioning problem for a graph. 
In \S \ref{sec:rel}, we introduce a relaxed formulation  of the eigenvalue partitioning problem and propose a rearrangement algorithm for its solution. 
In \S \ref{sec:num}, we apply our proposed partitioning algorithm to a number of examples, both data driven and purely geometric. 
We conclude in \S \ref{sec:disc} with a discussion.

\section{The graph partitioning model} \label{sec:model} 
 In this section, we introduce a new graph partitioning model based on the Dirichlet eigenvalues of the partition components. Before we consider the energy for a vertex partition, we first introduce the energy of a vertex subset.  
Define the \emph{Dirichlet energy} of a subset $S \subset V$, 
\begin{equation}
\label{eq:lam}
\lambda(S) := \inf_{\substack{ \| \psi \|_V = 1 \\  \psi |_{S^c}=0}} \ \|\nabla\psi\|^{2}_{w,E} ,
\end{equation}
where $S^c := V \setminus S$  denotes  the complement of $S$, 
\begin{align*}
 \|\nabla\psi\|^{2}_{w,E} := \sum_{ (i,j)\in E}w_{ij}(\psi_{i} - \psi_{j})^{2},  \quad
 \|\psi\|^{2}_{S} := \sum_{i\in S}d^r_{i}\psi_{i}^{2}, \quad \text{and} \quad  d_i := \sum_j w_{ij}. 
 \end{align*}
Here, we take the parameter $r\in [0,1]$.  Observe that for a connected component, $S$, of a disconnected graph, $\lambda(S) = 0$.  Roughly speaking, we see that $\|\nabla\psi\|^{2}_{w,E}$ measures how much $\psi$ varies over $S$, with changes across similar vertices (i.e., large $w_{ij}$) being penalized more than changes across dissimilar vertices.  Observe that if $\psi = \chi_S$, then $\| \nabla \psi \|^2_{w,E}$ simply reduces to $\sum_{i \in S, j \in S^c} w_{i,j} \equiv |\partial S|$, implying that 
 \eqref{eq:lam} measures variations across the boundary of $S$ as well.  
Consequently we see that $\lambda(S)$ is a measure of the connectedness of $S$ that takes into account both interior similarity as well as similarity to the rest of the graph.

The value $\lambda = \lambda(S)$ in \eqref{eq:lam} satisfies the following Dirichlet eigenvalue problem in $S$, for some corresponding eigenvector, $\psi = \psi(S)$,
\begin{subequations}
\label{eq:explcitEigEq}
\begin{align}
       &\Delta_r \psi  = \lambda\psi  \qquad  \text{on } S \subset V\\
      &\psi = 0 \qquad \quad \ \ \  \text{on } S^c.
\end{align}
\end{subequations}
The \emph{graph Laplaican}\footnote{We comment that the sign convention for the graph Laplacian is opposite to that used for the continuum Laplacian in most of the PDE literature (in particular, $\Delta_r$ is a positive semidefinite operator). }, 
$\Delta_r$, 
can be written $\Delta_{r} := D^{-r} (D - W) $ where $D$ is the degree matrix and $W$ is the  weight matrix for the graph. 
When $r=0$, this is the  \emph{combinatorial  graph Laplacian} or \emph{unnormalized symmetric graph Laplacian} and when $r=1$ this is the \emph{asymmetric normalized graph Laplacian} or \emph{random walk  graph Laplacian}. 
Thus, $(\lambda,\psi)$ in \eqref{eq:explcitEigEq}  is simply the first  eigenpair of a submatrix of $\Delta_r$. 
In this work, we will take $r\in[0,1]$ to be a fixed parameter, the choice dependent on the application. As such, we suppress the dependency of $\Delta_r$ on $r$. A more complete discussion of these graph objects can be found in  \cite{Chung:1997,Luxburg:2007}.

We define the \emph{energy of a $k$-partition}, $V = \amalg_{i=1}^{k} V_{i}$ to be  
$ \Lambda_k(\amalg_{i=1}^{k} V_{i}) = \sum_{i=1}^k  \lambda (V_i)$ where $\lambda$ is defined in \eqref{eq:lam}. The graph partitioning problem is then formulated as the following optimization problem 
\begin{equation}
\label{eq:DirObjFun} 
\Lambda^*_k := \min_{V= \amalg_{i=1}^k V_i}  \ \sum_{i=1}^k  \lambda (V_i).
\end{equation}
A minimizer always exists, as there are only finitely many partitions of the graph. 

The proposed model \eqref{eq:DirObjFun} is closely connected to several other problems and methods, which we outline now.

\paragraph{An analogous geometric problem.}
The eigenvalue partitioning problem has an analogous geometric formulation. 
Namely, given a bounded open set $\Omega \subset \mathbb R^2$, or more generally a compact manifold, find the partition $\Omega = \amalg_{i=1}^{k} \Omega_{i}$ which attains 
\begin{equation}
\label{eq:ContPart}
\inf_{\Omega = \amalg_{i=1}^{k} \Omega_{i}} \ \ \sum_{i=1}^k \lambda(\Omega_i),
\end{equation}
where $\lambda(\Omega_{i})$ denotes the first Dirichlet-Laplace eigenavalue of $\Omega_{i}$. 
Existence of optimal partitions for \eqref{eq:ContPart} in the class of quasi-open sets was proved in 
 \cite{Bucur1998}. Subsequently, several papers have investigated \eqref{eq:ContPart} and similar problems, focusing on the regularity of partitions, properties of optimal partitions, the asymptotic behavior  of optimal partitions as $k\to \infty$, and computational methods 
\cite{Cafferelli2007,Bonnaillie2007,Bourdin2010,Helffer2009,Helffer2010b,Helffer2010,Oudet2011,Bucur2013}. 
In particular, the relaxation of  the eigenvalue partitioning problem proposed here is analogous to a relaxation of \eqref{eq:ContPart} proposed in \cite{Bourdin2010}. 
The loss of infinitesimal scale on a graph has many consequences for the Dirichlet spectrum and partitioning problem. For example, the following statement is true in the continuum but fails on a graph: Any  eigenvalue of the Laplace-Dirichlet operator is also the first eigenvalue for each of the nodal domains of the eigenfunction.

\paragraph{Cheeger cut partitioning.} 
The Cheeger cut  \cite{Chung:1997} (or balanced cut) for a graph, $G$, is defined
\begin{equation}
\label{eq:Cheeger}
B^*(G) := \min_{S\subset V} \   B(S) 
\quad \text{where} \quad
B(S) := \frac{ |\partial S | }{ \min \left(\text{vol}(S), \text{vol}(S^c)\right)},
\end{equation} 
where $\partial S := \{(i,j)\in E \colon  i\in S \text{ and } j \in S^c \} $ is the edge boundary of $S$, $|\cdot |$ denotes cardinality,  and $\text{vol}(S) = \sum_{i\in S} d_i$. 
The vertex subset $S^*\subset V$ attaining the minimum in    \eqref{eq:Cheeger} is a 2-class vertex partition that is often used for the bipartitioning problem 
  \cite{SpielmanTeng96,Shi2000,Luxburg:2007}. This approach can be used recursively for the $k$-class problem 
  \cite{Bresson2012}. Alternatively, \cite{Bresson2013} mathematically formulates the general $k$-class vertex partitioning problem by generalizing the Cheeger cut \eqref{eq:Cheeger} and solving
$$
B^*_k (G):= \min_{V= \amalg_{\ell=1}^k V_\ell} \ \sum_{\ell=1}^k B(V_\ell). 
$$ 
By considering the test function $\psi = \chi_S$ for any $S\subset V$,  it follows that 
$\lambda(S) \leq \frac{|\partial S|}{\text{vol}(S)} $. Further relationships between the Dirichlet eigenvalues and ``local'' Cheeger cuts are studied in \cite{chung2007}. The Cheeger partitioning problem shares the attribute of having a geometric analogue; the goal consists of partitioning a manifold into $k$ submanifolds, and methods for solving  the Cheeger partitioning problem can be interpreted in terms of a mean curvature flow on a graph \cite{Gennip2013}.

\paragraph{Nonnegative matrix factorization (NMF).} 
Nonnegative matrix factorization is the general algebraic problem of finding a factorization of a matrix $A = \prod_{i}^{K}N_{i}$
where some, or all of the $N_{i}$ are constrained to be nonnegative.  This type of problem naturally arises in variable selection \cite{lee1999,hofmann1999} and clustering \cite{Yang2012}.  A popular approach for clustering applications is to solve 
 $$ 
 \min_{V\in \mathcal X} \ \| W - VV^{T} \|_{F}^{2}, 
\qquad  \text{where } \ \mathcal{X}: = \{V\in\mathbb R^{n\times k}\colon V^{T}V = \text{Id}, V_{ij} \geq 0\}.
 $$
Here $W$ is a similarity matrix constructed from the data, and $k$ is the desired number of clusters, and $ \text{Id}$ is the $k\times k$ identity matrix. The following proposition shows that in certain instances our proposed objective is equivalent to an NMF objective, where the matrix to be factorized is the transpose of the asymmetric graph Laplacian.

\begin{prop} \label{prop:NMF}
Let $\Psi^*: = \left[\psi_{1}^{D} |  \cdots | \psi_{k}^{D}\right]$ be the matrix where the columns are Dirichlet eigenvectors corresponding to the optimal partition for $r=1$.  Then 
$$ D^{1/2} \Psi^* = \arg\min_{U\in\mathcal{M}}\| D^{-1/2} W D^{-1/2}  - UU^{T}\|_{F}^{2},
\quad \text{where} \quad
\mathcal{M} :=\{U\in\mathbb R^{n\times k}\colon U^{T}U = \mathrm{Id}, U_{ij} \geq 0\},
$$ 
$D = \mathrm{diag}(d)$ is the degree matrix and $W$ is the similarity weight matrix.
\end{prop}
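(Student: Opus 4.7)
The plan is to reduce the NMF problem to maximizing $\operatorname{tr}(U^T A U)$ with $A := D^{-1/2}WD^{-1/2}$, exploit the nonnegativity-plus-orthogonality constraint of $\mathcal{M}$ to extract a $k$-partition of $V$ from each feasible $U$, and use the variational characterization of $\lambda(\cdot)$ to bound the trace in terms of $\Lambda_k^*$. Equality at $U = D^{1/2}\Psi^*$ will then yield the result.

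Concretely, I would first expand
\[
\|A - UU^T\|_F^2 = \|A\|_F^2 - 2\operatorname{tr}(U^T A U) + \|UU^T\|_F^2,
\]
and use $U^T U = \mathrm{Id}$ to compute $\|UU^T\|_F^2 = \operatorname{tr}(UU^T U U^T) = \operatorname{tr}(U^T U) = k$. Hence the minimization is equivalent to maximizing $\operatorname{tr}(U^T A U) = \sum_{\ell=1}^k u_\ell^T A u_\ell$ over $\mathcal{M}$, where $u_\ell$ denotes the $\ell$-th column of $U$.

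Next I would extract a partition from $U$: the orthogonality $u_\ell^T u_m = 0$ combined with entrywise nonnegativity forces the supports $V_\ell := \operatorname{supp}(u_\ell)$ to be pairwise disjoint, and after arbitrarily assigning any uncovered vertices one obtains a partition $V = \amalg_\ell \tilde V_\ell$ with $V_\ell \subset \tilde V_\ell$. Set $\tilde\psi_\ell := D^{-1/2} u_\ell$. Because $r=1$, one has $\|\tilde\psi_\ell\|_V^2 = \sum_i d_i \tilde\psi_{\ell,i}^2 = u_\ell^T u_\ell = 1$, so $\tilde\psi_\ell$ is admissible in \eqref{eq:lam} for $\tilde V_\ell$, giving
\[
\lambda(\tilde V_\ell) \leq \|\nabla \tilde\psi_\ell\|_{w,E}^2 = \tilde\psi_\ell^T(D-W)\tilde\psi_\ell = 1 - u_\ell^T A u_\ell.
\]
Summing over $\ell$ and invoking $\sum_\ell \lambda(\tilde V_\ell) \geq \Lambda_k^*$ gives $\operatorname{tr}(U^T A U) \leq k - \Lambda_k^*$.

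Finally, I would show the upper bound is attained at $U^* := D^{1/2}\Psi^*$. Each column $D^{1/2}\psi_\ell^D$ is nonnegative (take the first Dirichlet eigenvector on $V_\ell^*$ nonnegative via a Perron--Frobenius argument on the positive resolvent of the principal submatrix of $\Delta_1$ indexed by $V_\ell^*$), and the columns are $\ell^2$-orthonormal because the optimal supports are disjoint and $\|\psi_\ell^D\|_V = 1$; thus $U^*\in\mathcal M$. Plugging $\psi_\ell^D$ into the variational inequality above turns it into an equality $\lambda(V_\ell^*) = 1 - (u_\ell^*)^T A u_\ell^*$, so $\operatorname{tr}((U^*)^T A U^*) = k - \Lambda_k^*$, matching the bound. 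The main obstacle I anticipate is the nonnegativity of $\psi_\ell^D$, which rests on this combinatorial Perron--Frobenius step; everything else is an algebraic reduction that hinges on $r=1$ making $u \mapsto D^{-1/2}u$ an isometry between $(\mathbb{R}^n,\|\cdot\|_2)$ and $(\mathbb{R}^n,\|\cdot\|_V)$, without which the identity $\tilde\psi_\ell^T(D-W)\tilde\psi_\ell = 1 - u_\ell^T A u_\ell$ would fail.
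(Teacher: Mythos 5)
Your proposal is correct and follows essentially the same route as the paper: both reduce $\|D^{-1/2}WD^{-1/2}-UU^T\|_F^2$ under $U^TU=\mathrm{Id}$ to the trace objective $\operatorname{tr}(U^T(I-A)U)$ and identify $\mathcal M$ with partitions via the change of variables $U=D^{1/2}V$, which is exactly where $r=1$ enters. If anything, you are more explicit than the paper about the step it asserts as an ``equivalence'': you spell out that nonnegativity plus orthogonality forces disjoint supports (so every feasible $U$ yields an admissible partition and test functions), and you verify feasibility and optimality of $D^{1/2}\Psi^*$ directly, which is a worthwhile elaboration rather than a different proof.
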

\begin{proof}
Let $V$ be a collection of Dirichlet eigenvectors corresponding to some partition.  Then, by definition, we have $ \Delta V = V \text{diag}(\vec{\lambda}) $
where $\text{diag}(\vec{\lambda})$ is a $k\times k$ diagonal matrix, with the Dirichlet eigenvalues along the diagonal.  Moreover, $V$ satisfies $V_{ij}\geq 0$ and $ V^{T}DV=\text{Id}$.  Thus the partitioning problem \eqref{eq:DirObjFun} is equivalent to
\begin{subequations}
\label{eq:objTr}
\begin{align}
\Lambda^{*}_{k} =  \min_{V\in\mathbb{R}^{n\times k}} & \ \text{tr}(V^{T}D\Delta V) \\
 \text{s.t.} & \ V_{ij}\geq 0, \ \ V^{T}DV=\text{Id}.  
\end{align}
\end{subequations}
Using the definition of the graph Laplacian, the objective function can be expanded to 
$$
\text{tr}(V^{T}DV) - \text{tr}(V^{T}W V) =  
\text{tr}(V^{T}DV) - \text{tr}(V^{T}D^{1/2} D^{- 1/2} WD^{-1/2} D^{1/2} V). 
$$
Thus, \eqref{eq:objTr} is equivalent to
\begin{align*}
  \min_{V\in\mathbb{R}^{n\times k}} & \ \| D^{- 1/2} WD^{-1/2} - D^{1/2}VV^{T}D^{1/2}\|_{F}^{2} \\
 \text{s.t.} & \ V_{ij}\geq 0, \ \ V^{T}DV=\text{Id}.  
\end{align*}
After the change of variables $U:=D^{1/2}V$, we arrive at the stated proposition.
\end{proof}

\begin{remark} A variant of Proposition \ref{prop:NMF} can be shown for the $r=0$ Laplacian whenever the graph is regular, as is the case for an unweighted $k$-nearest neighbor graph. 
\end{remark}

We are not the first to connect NMF with spectral-based methods; \cite{ding2005} describes a connection between various spectral clustering objectives and NMF.
The algorithm proposed in \S \ref{sec:rel} for solving \eqref{eq:DirObjFun} is new for this  NMF objective; typical approaches to NMF problems are algebraic and involve finding good convex approximations. 
Our relaxation is not convex, and our algorithm is novel in that it does \emph{not} rely on a convex approximation or gradient descent method.  An interesting future direction of research might be to extend and analyze our geometric algorithm for other NMF objectives.

\section{Relaxation and a rearrangement algorithm} \label{sec:rel}
In this section, we find a relaxation of the graph partitioning problem \eqref{eq:DirObjFun} and introduce an efficient algorithm for solving the relaxed problem.  

For a vertex function, $\phi \colon V \to [0,1]$ and $\alpha>0$, consider the relaxed  energy
\begin{equation} \label{eq:relDL}
\lambda^{\alpha}(\phi) := \inf_{\| \psi \| = 1 } \|\nabla\psi\|^{2}_{w,E} + \alpha \|\psi\|^{2}_{ (1-\phi)}, 
\quad \text{where} \quad
 \|\psi\|^{2}_{f} := \sum_{i\in V}d_{i}^r f_i \psi_{i}^{2}.
\end{equation}
Observe that $\lambda^{\alpha}(\phi)$ in \eqref{eq:relDL}  is the first eigenvalue of the Schr\"odinger operator $\Delta + \alpha(1-\phi)$, and the eigenfunction, $\psi^\alpha$, satisfies 
\begin{equation}
\label{eq:RelaxedExplicitEig}
\left[  \Delta + \alpha(1-\phi) \right] \psi  = \lambda \psi  \qquad \text{ in } V.
 \end{equation}
The minimizer $\psi^\alpha$ is unique up to a scaling and can be chosen to be strictly positive, {\it i.e.}, for all $i\in V$, $\psi_i^\alpha >0$.\footnote{These facts can be obtained by applying the Perron-Frobenius theorem to the matrix $\beta \mathrm{Id} - [\Delta + \alpha(1-\phi)]$ for sufficiently large $\beta$.} Throughout, we will take $\psi^\alpha$ to be positive with $\| \psi^\alpha \|_V=1$.

If $\phi = \chi_S$ is the indicator function for the set $S\subset V$, then we intuitively think of $\lambda^\alpha (\chi_S)$ as  an approximation to $\lambda(S)$.  The following lemma shows that this approximation is exact in the limit that $\alpha\rightarrow \infty$; moreover, as $\alpha$ becomes large the eigenfunction corresponding to $\lambda^\alpha(\chi_S)$ becomes strongly localized on $S$. 
In the continuous case, on may interpret this relaxation as a   ``fictitious domain method'' \cite{Oudet2004,Bourdin2010}. 

 \begin{lemma} \label{lem:Conv}
 For $S\subset V$, 
 $\displaystyle \lim_{\alpha\to\infty} \lambda^{\alpha}(\chi_{S}) = \lambda(S)$ and 
  $\displaystyle \lim_{\alpha\to\infty} \psi^{\alpha}(\chi_{S}) = \psi^D(S)$, where $\psi^D(S)$ is the Dirichlet eigenvector achieving the infimum in \eqref{eq:lam}.
 \end{lemma}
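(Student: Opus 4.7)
My plan is to exploit the variational characterization \eqref{eq:relDL} together with a compactness argument: the unit sphere $\{\psi : \|\psi\|_V = 1\}$ lies in the finite-dimensional space $\mathbb{R}^{|V|}$ and hence is compact, so any bounded family of minimizers has convergent subsequences. The penalty $\alpha\|\psi\|^2_{(1-\chi_S)}$ will force any subsequential limit to vanish on $S^c$, at which point it becomes an admissible competitor for \eqref{eq:lam}.

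The first step is to establish monotonicity and an upper bound. For fixed $\psi$, the objective in \eqref{eq:relDL} is nondecreasing in $\alpha$, so $\alpha \mapsto \lambda^\alpha(\chi_S)$ is nondecreasing. Moreover, any $\psi$ admissible in \eqref{eq:lam} (that is, $\psi|_{S^c}=0$ and $\|\psi\|_V=1$) satisfies $\|\psi\|^2_{(1-\chi_S)} = \sum_{i\in S^c} d_i^r \psi_i^2 = 0$, so testing such $\psi$ in \eqref{eq:relDL} gives $\lambda^\alpha(\chi_S) \le \|\nabla \psi\|^2_{w,E}$; taking the infimum yields $\lambda^\alpha(\chi_S) \le \lambda(S)$ for every $\alpha>0$. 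Combined with monotonicity, $L := \lim_{\alpha\to\infty}\lambda^\alpha(\chi_S)$ exists and satisfies $L \le \lambda(S)$.

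Next I extract information from the minimizers $\psi^\alpha := \psi^\alpha(\chi_S)$. By compactness of the unit sphere, pick a sequence $\alpha_n \to \infty$ and a subsequence (still denoted $\alpha_n$) with $\psi^{\alpha_n} \to \psi^*$, where $\|\psi^*\|_V=1$ and $\psi^*_i \ge 0$. Since
\[
\alpha_n \|\psi^{\alpha_n}\|^2_{(1-\chi_S)} \le \lambda^{\alpha_n}(\chi_S) \le \lambda(S),
\]
we get $\|\psi^{\alpha_n}\|^2_{(1-\chi_S)} \to 0$, which forces $\psi^*_i = 0$ for every $i \in S^c$ with $d_i>0$ (and we may harmlessly set any isolated vertices to zero). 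Hence $\psi^*$ is admissible in \eqref{eq:lam}. Continuity of $\|\nabla \cdot\|^2_{w,E}$ yields
\[
\lambda(S) \le \|\nabla \psi^*\|^2_{w,E} = \lim_{n\to\infty}\bigl(\|\nabla \psi^{\alpha_n}\|^2_{w,E} + \alpha_n\|\psi^{\alpha_n}\|^2_{(1-\chi_S)}\bigr) = L,
\]
where the last equality uses that both terms on the right-hand side are nonnegative and their sum converges to $L$ (and the penalty term tends to $0$). Combined with $L \le \lambda(S)$, this proves $L = \lambda(S)$.

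Finally, for the eigenvector convergence I use that the limit $\psi^*$ attains the infimum in \eqref{eq:lam} and is coordinate-wise nonnegative. The first Dirichlet eigenvalue $\lambda(S)$ of the submatrix of $\Delta_r$ indexed by $S$ is simple, since the footnote after \eqref{eq:RelaxedExplicitEig} gives a Perron-Frobenius argument (apply it to a large shift of that submatrix); consequently the nonnegative unit eigenvector $\psi^D(S)$ is unique. Therefore every subsequential limit of $\psi^\alpha$ equals $\psi^D(S)$, and a standard argument shows the whole family converges. The main subtlety to watch is ensuring the penalty term really annihilates $\psi^*$ on all of $S^c$ (handled by positivity of $d_i^r$ on non-isolated vertices) and invoking simplicity of the first eigenvalue to upgrade subsequential to full convergence of the eigenvectors.
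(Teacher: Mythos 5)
Your proof is correct and follows essentially the same route as the paper: monotonicity in $\alpha$ plus the test-function bound $\lambda^\alpha(\chi_S)\le\lambda(S)$ for one direction, then compactness of the unit sphere and admissibility of the subsequential limit for the other, and uniqueness of the nonnegative Dirichlet ground state to upgrade to full convergence of the eigenvectors. If anything you are more careful than the paper, which asserts $\|\tilde\psi\|^2_{S^c}=0$ without the bound $\alpha\|\psi^\alpha\|^2_{(1-\chi_S)}\le\lambda(S)$ that you supply (and the paper gets monotonicity from $\tfrac{d\lambda}{d\alpha}=\|\psi\|^2_{S^c}$ rather than your fixed-test-function argument, a cosmetic difference).
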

\begin{proof}
A simple computation shows that
$\frac{d\lambda}{d\alpha} = \|\psi\|^{2}_{S^{c}} > 0$, 
where $\psi$ is the corresponding normalized eigenvector.  
Moreover, it is clear that $\lambda^{\alpha}(\chi_{S}) \leq \lambda(S)$. Consequently $\displaystyle \lim_{\alpha\rightarrow\infty} \lambda^{\alpha}(\chi_{S})$ exists and satisfies $\displaystyle \lim_{\alpha\rightarrow\infty} \lambda^{\alpha}(\chi_{S}) \leq \lambda(S)$. 

For the reverse inequality, observe that if we normalize all eigenvectors, then after possibly passing to a subsequence, there exists a $\tilde \psi$ such that  $\psi^{\alpha} \rightarrow \tilde{\psi}$, and $\|\tilde{\psi}\|^{2}_{S^{c}}=0$.  Thus $\tilde{\psi}$ is admissible for the Dirichlet eigenvalue problem in $S$, giving us that $\lambda(S)\leq\displaystyle\lim_{\alpha\rightarrow\infty} \lambda^{\alpha}(\chi_{S})$.

Since the minimizer of the Dirichlet problem is unique, $\tilde \psi = \psi^D$. Thus, the previous argument shows that the only limit point of $\{ \psi^\alpha\}_\alpha$ is $ \psi^D$ and so $\displaystyle \lim_{\alpha\to\infty} \psi^{\alpha} = \psi^D$.
\end{proof}

Define the admissible class 
$$
\mathcal A_k = \{ \{\phi_i\}_{i=1}^k \colon 
\phi_i\colon V \rightarrow  [0,1]
\text{ and } 
\sum_{i=1}^k \phi_i  = 1 \}.
$$
Observe that the set of indicator functions for any $k$-partition of the vertices  is a member of $\mathcal A_k$. 
For 
$\{\phi_i\}_{i=1}^k \in \mathcal A_k$ and $\alpha> 0$, we define the \emph{relaxed energy}, 
$\Lambda_{k}^{\alpha}( \{\phi_\ell\}_{\ell=1}^k ) = \sum_{i=1}^{k}\lambda^{\alpha}(\phi_{i})$, 
where $\lambda^\alpha$ is defined in \eqref{eq:relDL}.
Thus, a relaxed version of the graph partitioning problem \eqref{eq:DirObjFun} can be formulated
\begin{equation}
\label{eq:relaxDirPart}
\Lambda_{k}^{\alpha,*} := \min_{ \{\phi_i\}_{i=1}^k \in \mathcal A_k } \   \sum_{i=1}^k  \lambda^\alpha (\phi_i).
\end{equation}

It is a consequence of Lemma \ref{lem:Conv} that for any $\{\phi_i\}_{i=1}^k \in \mathcal A_k$,  
$\Lambda_{k}^{\alpha}( \{\phi_i\}_{i=1}^k )$ is monotonically increasing in $\alpha$ and for any partition $V = \amalg_{i=1}^{k} V_{i}$, 
$\displaystyle\lim_{\alpha\rightarrow\infty} \Lambda_k^{\alpha}(\{\chi_{V_i}\}_{i=1}^{k}) = \Lambda_k(\amalg_{i=1}^{k}V_{i})$.
However, in practice, we desire a solution to \eqref{eq:relaxDirPart} for finite  $\alpha>0$.  We observe that $\Lambda_k^{\alpha}$ is bounded below by zero, and is being minimized over the compact set $\mathcal A_k$.  Thus a minimizer always exists. Supposing momentarily that we are able to find it, it is not yet clear how to interpret the collection $\{ \phi_i^* \}$, which attains the minimum, as a vertex partition, as sought in \eqref{eq:DirObjFun}. The following theorem, which is analogous to a continuous version in \cite[Thm. 2.3]{Bourdin2010}, tells us how this is accomplished.

\begin{theorem}  \label{thm:BangBang}
Let $k\in \mathbb Z^+$ and $\alpha>0$ be fixed. Every (local) minimizer of $\Lambda_k^{\alpha}$ over $\mathcal A_k$ is a collection of indicator functions.
\end{theorem}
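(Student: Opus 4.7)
The plan is to leverage the concavity of $\lambda^{\alpha}$. Since $\lambda^{\alpha}(\phi)$ is defined in \eqref{eq:relDL} as an infimum over unit $\psi$ of the functionals $g_{\psi}(\phi) := \|\nabla\psi\|^{2}_{w,E} + \alpha\|\psi\|^{2}_{(1-\phi)}$, each of which is \emph{affine} in $\phi$, the map $\phi \mapsto \lambda^{\alpha}(\phi)$ is concave on $[0,1]^{V}$. Hence $\Lambda_{k}^{\alpha}$ is concave on the convex polytope $\mathcal{A}_{k}$, which is a Cartesian product over $v \in V$ of standard $(k{-}1)$-simplices. The extreme points of this product are precisely the collections of partition indicator functions, so for a \emph{global} minimum this observation alone would suffice; the real content of the theorem is the local statement, which will require a sharpening to strict concavity along appropriate perturbation directions.

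Suppose, for contradiction, that $\{\phi_{i}^{*}\} \in \mathcal{A}_{k}$ is a local minimizer that is not a collection of indicators. Then there exist a vertex $v \in V$ and indices $i \neq j$ with $\phi_{i}^{*}(v), \phi_{j}^{*}(v) \in (0,1)$. For $\epsilon > 0$ small enough that the values stay in $[0,1]$, define
\[
\phi_{i}^{\pm} := \phi_{i}^{*} \pm \epsilon\,\delta_{v}, \qquad \phi_{j}^{\pm} := \phi_{j}^{*} \mp \epsilon\,\delta_{v},
\]
leaving all other components unchanged; both $\{\phi_{\ell}^{+}\}$ and $\{\phi_{\ell}^{-}\}$ are admissible in $\mathcal{A}_{k}$, and $\{\phi_{\ell}^{*}\}$ is their midpoint. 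If I can establish that $\lambda^{\alpha}$ is strictly concave along each of the two segments from $\phi_{i}^{-}$ to $\phi_{i}^{+}$ and from $\phi_{j}^{-}$ to $\phi_{j}^{+}$, then, adding the two strict inequalities at the midpoint $t=\tfrac12$ and using that the remaining $\lambda^{\alpha}(\phi_{\ell}^{*})$ contribute the same on both sides, we conclude
\[
\Lambda_{k}^{\alpha}(\{\phi_{\ell}^{*}\}) \;>\; \tfrac{1}{2}\Lambda_{k}^{\alpha}(\{\phi_{\ell}^{+}\}) + \tfrac{1}{2}\Lambda_{k}^{\alpha}(\{\phi_{\ell}^{-}\}),
\]
so at least one of the two admissible nearby configurations strictly decreases the energy, contradicting local minimality of $\{\phi_{i}^{*}\}$.

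The main obstacle is justifying the strict concavity step. For $t \in (0,1)$, let $\phi_{t} := t\phi_{i}^{+} + (1-t)\phi_{i}^{-}$ and let $\psi_{t}$ denote the unit, strictly positive principal eigenvector of $H_{\phi_{t}} := \Delta + \alpha(1-\phi_{t})$ guaranteed by the Perron--Frobenius footnote following \eqref{eq:RelaxedExplicitEig}. Since $g_{\psi_{t}}$ is affine in $\phi$,
\[
\lambda^{\alpha}(\phi_{t}) = g_{\psi_{t}}(\phi_{t}) = t\,g_{\psi_{t}}(\phi_{i}^{+}) + (1-t)\,g_{\psi_{t}}(\phi_{i}^{-}) \;\geq\; t\,\lambda^{\alpha}(\phi_{i}^{+}) + (1-t)\,\lambda^{\alpha}(\phi_{i}^{-}),
\]
with equality iff $\psi_{t}$ is \emph{simultaneously} the principal eigenvector of both $H_{\phi_{i}^{+}}$ and $H_{\phi_{i}^{-}}$. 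In that case, subtracting the two corresponding eigenvalue equations gives $-\alpha(\phi_{i}^{+}-\phi_{i}^{-})(w)\,\psi_{t}(w) = c\,\psi_{t}(w)$ for all $w\in V$ and some constant $c$; the coordinatewise positivity of $\psi_{t}$ then forces $\phi_{i}^{+} - \phi_{i}^{-} = 2\epsilon\,\delta_{v}$ to be constant on $V$, which is impossible whenever $|V| \geq 2$. The identical argument along the $j$-th segment then completes the strict concavity claim and closes the contradiction.
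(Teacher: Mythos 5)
Your proposal is correct and follows essentially the same route as the paper's proof: concavity of $\lambda^{\alpha}$ as an infimum of affine functionals, a perturbation $\pm\epsilon\,\delta_{v}$ transferring mass between two components that are fractional at a common vertex, and the equality-case analysis in which a shared positive principal eigenvector forces $2\epsilon\,\delta_{v}$ to be constant on $V$, a contradiction. The only difference is organizational: you establish strict concavity along the perturbation segment up front, whereas the paper first derives equality from local optimality and then contradicts the equality condition, but the underlying argument is the same.
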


To prove Theorem \ref{thm:BangBang}, we first prove the following lemma. 
\begin{lemma} \label{lem:lamConvex} For $\alpha>0$ fixed, $\lambda^{\alpha}(\phi)$ is a concave function of $\phi$.
\end{lemma}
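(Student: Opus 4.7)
The plan is to exploit the variational characterization of $\lambda^{\alpha}(\phi)$ in \eqref{eq:relDL} and the elementary fact that the pointwise infimum of affine functions is concave. The key observation is that although $\lambda^\alpha(\phi)$ is defined through a non-trivial eigenvalue problem, the Rayleigh-type functional being minimized is \emph{linear} in $\phi$ for each fixed admissible $\psi$.

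Specifically, I would first fix an arbitrary $\psi$ with $\|\psi\|_V = 1$ and, using the definition $\|\psi\|^{2}_{(1-\phi)} = \sum_{i\in V} d_i^r (1-\phi_i) \psi_i^2$, write the quantity inside the infimum in \eqref{eq:relDL} as
\begin{equation*}
F(\psi,\phi) \;=\; \|\nabla\psi\|^{2}_{w,E} \;+\; \alpha \sum_{i\in V} d_i^r \psi_i^2 \;-\; \alpha \sum_{i\in V} d_i^r \psi_i^2 \, \phi_i ,
\end{equation*}
which is manifestly an affine function of $\phi$. Second, I would invoke the standard convex-analytic fact that the pointwise infimum of any family of affine (equivalently, concave) functions is concave: for $\phi^{(0)},\phi^{(1)}$ admissible and $t\in[0,1]$, letting $\phi^{(t)} = (1-t)\phi^{(0)} + t\phi^{(1)}$, and picking any $\psi$ with $\|\psi\|_V=1$,
\begin{equation*}
F(\psi,\phi^{(t)}) \;=\; (1-t)\,F(\psi,\phi^{(0)}) \;+\; t\,F(\psi,\phi^{(1)}) \;\geq\; (1-t)\,\lambda^{\alpha}(\phi^{(0)}) \;+\; t\,\lambda^{\alpha}(\phi^{(1)}).
\end{equation*}
Taking the infimum over such $\psi$ on the left yields $\lambda^{\alpha}(\phi^{(t)}) \geq (1-t)\lambda^{\alpha}(\phi^{(0)}) + t\lambda^{\alpha}(\phi^{(1)})$, which is concavity.

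There is no real obstacle here beyond bookkeeping; the only point that requires a moment's care is confirming that the admissible set $\{\psi:\|\psi\|_V=1\}$ in the infimum does not depend on $\phi$, so that the min-affine representation is clean. This independence is immediate from \eqref{eq:relDL}, where the constraint involves only the fixed weight $d_i^r$, not $\phi$. I would close by remarking that this argument in fact gives concavity of $\lambda^{\alpha}$ as a function on the full convex set $[0,1]^V$ (not only on $\mathcal{A}_k$), which is what Theorem~\ref{thm:BangBang} will need in order to push any interior mass to the extreme points of $\mathcal{A}_k$.
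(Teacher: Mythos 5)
Your proposal is correct and is essentially the paper's own argument: both proofs rest on the observation that for fixed $\psi$ the quantity $\|\nabla\psi\|^{2}_{w,E} + \alpha\|\psi\|^{2}_{(1-\phi)}$ is affine in $\phi$, so that $\lambda^{\alpha}$ is an infimum of affine functions and hence concave. The only cosmetic difference is that the paper evaluates at the minimizer attained for the convex combination and splits the norm term, whereas you phrase it as the general ``pointwise infimum of affine functions'' fact; these are the same computation.
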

\begin{proof}
Let $t\in(0,1)$ and $\phi_i\colon V\to \mathbb R$ for $i=1,2$. Using \eqref{eq:relDL} and the fact that the minimum is achieved for some normalized $\psi$, we compute
\begin{align*}
 \lambda^{\alpha}(t\phi_{1} + (1-t)\phi_{2}) &= \|\nabla\psi\|^{2}_{w,E} + \alpha \|\psi\|^{2}_{(1-t\phi_{1} - (1-t)\phi_{2})} \\
 &=\|\nabla\psi\|^{2}_{w,E} + t \alpha \|\psi\|^{2}_{(1-\phi_{1})} + (1-t) \alpha \|\psi\|^{2}_{(1-\phi_{2})} \\
 &\geq t\lambda^{\alpha}(\phi_{1}) + (1-t)\lambda^{\alpha}(\phi_{2} ).
\end{align*}
\end{proof}
\begin{proof}[Proof of Theorem \ref{thm:BangBang}.] Our proof closely follows the proof of \cite[Thm. 2.3]{Bourdin2010}. 
The set $\mathcal A_k$ is  the probability simplex in $\mathbb{R}^{k}$, and its extreme points are clearly given by the indicator functions.  As Lemma \ref{lem:lamConvex} shows, 
$\Lambda^{\alpha}$ is a concave function on $\mathcal A_k$, so has a minimum and at least one minimizer is an extreme point of $\mathcal A_k$.

Now suppose that there exists some $\{ \phi_i\}_{i=1}^k \in \mathcal A_k$ that achieves the minimum and is not an extreme point.  Since $\sum_{i=1}^k \phi_i = 1$ there exist at least two  $\phi$'s which are not $\{0,1\}$-valued at a vertex $v\in V$.  After re-indexing, suppose these are given by $\phi_{1}$ and $\phi_{2}$. Thus, there exists $\epsilon >0$ such that 
$\epsilon < \phi_{i}(v) < 1-\epsilon \quad i=1,2$.
By concavity of $\lambda^{\alpha}$, we have
\begin{subequations}
\label{eq:lamSums}
\begin{align}
\lambda^{\alpha}(\phi_{1}) \geq \frac{1}{2}\lambda^{\alpha}(\phi_{1} + \epsilon1_{v}) + \frac{1}{2}\lambda^{\alpha}(\phi_{1} - \epsilon1_{v}) \\
\lambda^{\alpha}(\phi_{2}) \geq \frac{1}{2}\lambda^{\alpha}(\phi_{2} + \epsilon1_{v}) + \frac{1}{2}\lambda^{\alpha}(\phi_{2} - \epsilon1_{v})
\end{align}
\end{subequations}
Adding these, and recognizing the right-hand side as an average, we must have
\begin{center}
$\lambda^{\alpha}(\phi_{1}) + \lambda^{\alpha}(\phi_{2}) \geq \min\{\lambda^{\alpha}(\phi_{1} + \epsilon1_{v}) +\lambda^{\alpha}(\phi_{2} - \epsilon1_{v}), \lambda^{\alpha}(\phi_{1} - \epsilon1_{v}) +\lambda^{\alpha}(\phi_{2} + \epsilon1_{v})\}$
\end{center}
But both terms in the minimum are feasible perturbations, thus by optimality of $\{\phi_i\}$, we must have equality:
\begin{center}
$\lambda^{\alpha}(\phi_{1}) + \lambda^{\alpha}(\phi_{2}) = \lambda^{\alpha}(\phi_{1} + \epsilon1_{v}) +\lambda^{\alpha}(\phi_{2} - \epsilon1_{v}) = \lambda^{\alpha}(\phi_{1} - \epsilon1_{v}) +\lambda^{\alpha}(\phi_{2} + \epsilon1_{v})$
\end{center}
But this implies equality in \eqref{eq:lamSums} as well:
\begin{center}
$\lambda^{\alpha}(\phi_{1}) = \frac{1}{2}\lambda^{\alpha}(\phi_{1} + \epsilon1_{v}) + \frac{1}{2}\lambda^{\alpha}(\phi_{1} - \epsilon1_{v})$\ \\
\ \\
$\lambda^{\alpha}(\phi_{2}) = \frac{1}{2}\lambda^{\alpha}(\phi_{2} + \epsilon1_{v}) + \frac{1}{2}\lambda^{\alpha}(\phi_{2} - \epsilon1_{v})$
\end{center}
\ \\
From the proof of Lemma \ref{lem:lamConvex}, we conclude that the eigenvector $\psi$ corresponding to $\phi_{1}$ is also an eigenvector for $\phi_{1} + \epsilon1_{v}$ and $\phi_{1} - \epsilon1_{v}$.  
We can subtract the following equations 
\begin{displaymath}
   \left\{
     \begin{array}{lr}
       \Delta_{G}\psi + \alpha(1- \phi_{1} - \epsilon1_{v})\psi = \lambda^{\alpha}(\phi_{1} + \epsilon1_{v})\psi \\
      \Delta_{G}\psi + \alpha(1- \phi_{1} + \epsilon1_{v})\psi = \lambda^{\alpha}(\phi_{1} - \epsilon1_{v})\psi
     \end{array}
   \right.
\end{displaymath} 
and using $\psi > 0$, simplify to yield
\begin{center}
$\phi_{1} + \epsilon 1_{v} - (\phi_{1} - \epsilon 1_{v}) = 2\epsilon 1_{v} \equiv C > 0$
\end{center}
for some constant $C$, which is clearly a contradiction.
\end{proof}

For fixed $\alpha>0$, we now consider the problem of solving the relaxed partitioning problem  \eqref{eq:relaxDirPart}. Since $\Lambda^\alpha_k\colon \mathcal A_k \to \mathbb R$ is Fr\'echet differentiable, we could apply a gradient descent algorithm analogous to the continuous method proposed in \cite{Bourdin2010}.  
Instead, we propose a \emph{rearrangement algorithm} (Algorithm \ref{alg:Rearrangement}). 
This is illustrated in Fig. \ref{fig:Rearr}. In Lemma \ref{lem:RearrDec},  we prove that Algorithm \ref{alg:Rearrangement} strictly decreases $\Lambda^{\alpha}_k$ at each iteration. This result is then strengthened in Theorem \ref{thm:rearr}, to show that not only do the iterates decrease the objective function, but the iterates terminate in a finite number of steps to a local minimum.  

\begin{algorithm}[t]
\caption{\label{alg:Rearrangement} A rearrangement algorithm for \eqref{eq:relaxDirPart}.}
\vspace{.2cm}
\begin{algorithmic}
\STATE{\bfseries Input:} An initial $\{ \phi_i\}_{i=1}^k \in \mathcal A_k$. \\

\vspace{.2cm}

\WHILE { not converged,}
\STATE For $i=1,\ldots,k$, compute the (positive and  normalized) eigenfunction $\psi_i$ corresponding to $\lambda^{\alpha}(\phi_i)$ in \eqref{eq:relDL}.
\STATE Assign each node $v\in V$ the label $i = \arg\max_j \ \psi_j(v) $.
\STATE Let $\{ \phi_i \}_{i=1}^k$ be the indicator functions for the labels.
\ENDWHILE
\end{algorithmic}
\end{algorithm}

\begin{lemma} \label{lem:RearrDec}
Assume $\{ \phi_{i} \}_{i=1}^k \in \mathcal A_k$ is not fixed by the rearrangement algorithm (Algorithm  \ref{alg:Rearrangement}).  Then one iteration of the rearrangement algorithm results in a strict decrease in $\Lambda^{\alpha}_k$.
\end{lemma}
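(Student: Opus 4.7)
The plan is to use each $\psi_i$---the positive, normalized eigenfunction corresponding to $\lambda^{\alpha}(\phi_i)$ that the algorithm has just computed---as a trial function in the variational definition \eqref{eq:relDL} applied to the updated weight $\tilde\phi_i$, where $\tilde\phi = \{\tilde\phi_i\}_{i=1}^k$ denotes the output of one pass of Algorithm \ref{alg:Rearrangement} starting from $\phi$. Since $\|\psi_i\|_V = 1$, $\psi_i$ is admissible for $\lambda^{\alpha}(\tilde\phi_i)$, so
\begin{equation*}
\lambda^\alpha(\tilde\phi_i) \;\leq\; \|\nabla \psi_i\|^2_{w,E} + \alpha \|\psi_i\|^2_{(1-\tilde\phi_i)},
\end{equation*}
while the corresponding line for $\phi_i$ holds with equality because $\psi_i$ actually attains the infimum. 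Subtracting and summing over $i$ cancels the gradient contributions and leaves
\begin{equation*}
\Lambda_k^\alpha(\tilde\phi) - \Lambda_k^\alpha(\phi) \;\leq\; \alpha \sum_{v \in V} d_v^r \Bigl[\,\sum_{i=1}^k \phi_i(v)\,\psi_i(v)^2 \;-\; \sum_{i=1}^k \tilde\phi_i(v)\,\psi_i(v)^2 \,\Bigr].
\end{equation*}

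The next step is a pointwise analysis of the bracketed quantity. By construction $\tilde\phi_i(v)$ is the indicator of $i = \arg\max_j \psi_j(v)$, so $\sum_i \tilde\phi_i(v)\,\psi_i(v)^2 = \max_j \psi_j(v)^2$, while $\{\phi_i(v)\}_i$ is a probability vector on $\{1,\dots,k\}$ and hence $\sum_i \phi_i(v)\,\psi_i(v)^2 \leq \max_j \psi_j(v)^2$ by the elementary ``convex combination does not exceed maximum'' bound. Summing these nonnegative pointwise gaps already gives the weak monotonicity statement $\Lambda_k^\alpha(\tilde\phi) \leq \Lambda_k^\alpha(\phi)$.

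The main obstacle, and where I expect most of the work to lie, is upgrading this to a \emph{strict} inequality under the ``not fixed'' hypothesis. I would split into two cases. First, if $\phi$ is already an indicator partition, then $\tilde\phi \neq \phi$ produces a vertex $v$ with $\phi_i(v) = 1$ but $\psi_i(v) < \max_j \psi_j(v)$, making the convex-combination bound strict at $v$ and forcing a strict drop. Second, if $\phi$ has fractional values but the convex-combination bound happens to be tight pointwise (i.e.\ every $\phi_i(v)>0$ occurs only at indices attaining $\max_j \psi_j(v)$), I would extract strictness from the trial-function inequality instead: equality there for every $i$ would force $\psi_i$ to satisfy the eigenvalue equation \eqref{eq:RelaxedExplicitEig} for $\tilde\phi_i$ as well, and subtracting the two eigenvalue equations together with the pointwise positivity $\psi_i > 0$ would force $\tilde\phi_i - \phi_i$ to be constant on $V$ for each $i$. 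Showing that this constant-difference configuration is incompatible with $\tilde\phi \neq \phi$---possibly after imposing a mild genericity assumption on how ties in the $\arg\max$ are resolved in Algorithm \ref{alg:Rearrangement}---is the delicate piece that closes the argument.
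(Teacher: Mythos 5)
Your argument is essentially the paper's proof: the same two-step chain of inequalities (pointwise maximization for the potential term, then the variational characterization with $\psi_i$ as trial function), with strictness extracted exactly as in the paper by noting that equality forces each $\psi_i$ to be an eigenvector of the updated operator and hence, by positivity of $\psi_i$, forces $\tilde\phi_i-\phi_i$ to be constant. The ``delicate piece'' you flag at the end needs no genericity assumption: since each $\tilde\phi_i$ is a nontrivial $\{0,1\}$-valued indicator and $\phi_i=\tilde\phi_i-c_i$ must stay in $[0,1]$, the constant $c_i$ must vanish, so $\phi=\tilde\phi$, contradicting the hypothesis.
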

\begin{proof}
Suppose $\{ \phi_{i} \}_{i=1}^k \in \mathcal A_k$ is not fixed by the rearrangement algorithm and let 
  $\{ \phi_{i}^+ \}_{i=1}^k \in \mathcal A_k$ be the next iterate. 
Let $\psi_i$ denote the first (normalized, positive) eigenvector of the operator $\Delta + \alpha(1-\phi_{i})$. We compute
\begin{subequations}
\label{eq:Edec}
\begin{align}
\Lambda^\alpha(\{\phi_i\}_{i=1}^{k}) &= \sum\lambda^{\alpha}(\phi_{i}) 
= \sum \| \nabla \psi_i \|_{2}^{2} + \alpha \|\psi_i \|_{(1-\phi_i)}^{2}  \\
\label{eq:Edecb}
& \geq\sum \|\nabla\psi_i  \|_{2}^{2} +\alpha \|\psi_i \|_{(1-\phi^+_i)}^{2} \\
\label{eq:Edecc}
& \geq \sum\lambda^{\alpha}(\phi^+_i). 
\end{align}
\end{subequations}
The inequality in \eqref{eq:Edecb} follows from the construction of the algorithm.
The inequality in \eqref{eq:Edecc} follows from  \eqref{eq:relDL}. Moreover, equality in \eqref{eq:Edecc} holds if and only if $\psi_i$ is also an eigenvector for the updated Schr\"dinger operator 
 $\Delta + \alpha(1-\phi^+_i)$ for all $i=1,\ldots,k$.  
 From the proof of Theorem \ref{thm:BangBang}, we find that $\phi_i - \phi^+_i$ is a constant function for all $i=1,\ldots,k$.  This contradicts the assumption that $\{ \phi_{i} \}_{i=1}^k$ is not fixed by the rearrangement algorithm. 
\end{proof}
\begin{theorem} \label{thm:rearr}
Let $\alpha>0$. For any initialization, the rearrangement algorithm \ref{alg:Rearrangement} terminates in a finite number of steps at a local minimum of  $\Lambda_k^\alpha$, as defined in \eqref{eq:relaxDirPart}. 
\end{theorem}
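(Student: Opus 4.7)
The plan is to prove the theorem in two stages: finite termination first, then the local-minimum property at the limit. For finite termination, I would observe that every iterate after the first is a collection of indicator functions (the $\arg\max$ rule produces a genuine vertex partition), so the iterates live in the finite set of $k$-partitions of $V$, of cardinality at most $k^{|V|}$. By Lemma~\ref{lem:RearrDec}, $\Lambda_k^\alpha$ strictly decreases whenever the current iterate is not already a fixed point of the rearrangement map, so no state can be revisited. Therefore the algorithm reaches a fixed point in at most $k^{|V|}$ iterations.

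For the local-minimum claim, let $\phi^* = \{\phi^*_i\}$ be the terminating iterate, with associated positive normalized eigenvectors $\psi_i$ of $\Delta + \alpha(1 - \phi^*_i)$, and let $i^*(v)$ denote the label assigned to $v$ by $\phi^*$. The fixed-point condition is $\psi_{i^*(v)}(v) \geq \psi_j(v)$ for every $v$ and every $j$. I would compute the directional derivative of $\Lambda_k^\alpha$ at $\phi^*$ along any admissible tangent direction $\delta$, characterized by $\sum_i \delta_i \equiv 0$ and $\delta_j(v) \geq 0$ for $j \neq i^*(v)$ (reflecting that mass can only leave the occupied component). Differentiating the eigenvalue equation \eqref{eq:RelaxedExplicitEig} and using that $\Delta + \alpha(1-\phi_i)$ is self-adjoint in the $d^r$-weighted inner product, the Hellmann--Feynman formula gives
\[
\frac{\partial \lambda^\alpha(\phi_i)}{\partial \phi_i(v)} = -\alpha\, d_v^r\, \psi_i(v)^2,
\]
and eliminating $\delta_{i^*(v)}(v)$ via the constraint yields
\[
D_\delta \Lambda_k^\alpha(\phi^*) = \alpha \sum_v d_v^r \sum_{j \neq i^*(v)} \delta_j(v)\,\bigl[\psi_{i^*(v)}(v)^2 - \psi_j(v)^2\bigr] \;\geq\; 0,
\]
by the fixed-point condition and positivity of the $\psi_i$. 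A first-order Taylor expansion then gives $\Lambda_k^\alpha(\phi^* + \epsilon\delta) \geq \Lambda_k^\alpha(\phi^*)$ for $\epsilon$ small, wherever this directional derivative is strictly positive.

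The main obstacle is the degenerate case where $\psi_{i^*(v)}(v) = \psi_j(v)$ for some $j \neq i^*(v)$: along the corresponding tangent direction the first-order change vanishes, and the concavity of $\lambda^\alpha$ given by Lemma~\ref{lem:lamConvex} a priori allows a non-positive second-order contribution, which would conflict with the local-minimum conclusion. I would resolve this either under a genericity assumption (strict $\arg\max$ at every vertex at termination, yielding a strict local minimum via the first-order argument above) or by arguing that in every zero-derivative direction the eigenvectors $\psi_{i^*(v)}, \psi_j$ remain admissible minimizers for the perturbed Schr\"odinger problems, so that $\lambda^\alpha$ is in fact affine along the direction and the value is preserved (still a local minimum in the weak sense). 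Combined with Theorem~\ref{thm:BangBang}, which already restricts local minimizers of $\Lambda_k^\alpha$ on $\mathcal A_k$ to extreme points, this reasoning delivers the desired conclusion.
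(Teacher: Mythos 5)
Your main line of argument coincides with the paper's proof: finite termination follows from Lemma~\ref{lem:RearrDec} together with the observation that every iterate after the first lies in the finite set of $k$-partitions, and local optimality at a fixed point is argued by computing the Fr\'echet derivative $-\alpha\sum_i\langle\psi_i^2,\delta\phi_i\rangle$ and showing it is nonnegative on the cone of admissible perturbations via the fixed-point property $\psi_{i^*(v)}(v)\ge\psi_j(v)$. Your computation, including the $d_v^r$ weight and the elimination of $\delta_{i^*(v)}(v)$ through the constraint $\sum_i\delta_i\equiv 0$, reproduces the paper's display essentially verbatim; the paper stops at that inequality and concludes local optimality.

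The additional issue you raise --- directions in which the derivative vanishes because of a tie $\psi_{i^*(v)}(v)=\psi_j(v)$, where concavity of $\lambda^\alpha$ permits a second-order decrease --- is a legitimate concern, and the paper's own proof does not engage with it. However, neither of your proposed resolutions closes it. The genericity assumption proves a weaker statement than the theorem as stated. More importantly, the ``affine along the direction'' claim is false: if the ground state $\psi_i$ of $\Delta+\alpha(1-\phi_i)$ remained an eigenvector of $\Delta+\alpha(1-\phi_i\mp\epsilon\chi_{\{v\}})$, then subtracting the two eigenvalue equations and using $\psi_i>0$ would force $\epsilon\chi_{\{v\}}$ to equal a nonzero constant function --- precisely the contradiction exploited in the proofs of Theorem~\ref{thm:BangBang} and Lemma~\ref{lem:RearrDec}. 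Hence $\lambda^\alpha$ lies \emph{strictly} below its tangent along any single-vertex perturbation direction, and the degenerate case cannot be dismissed by affineness. As written, your proposal establishes exactly what the paper's proof establishes (nonnegativity of all feasible directional derivatives at a fixed point) and no more; the tied case would require a genuinely different device.
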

\begin{proof}
It follows from Lemma \ref{lem:RearrDec} and the finiteness of $V$ that for any initialization,  the rearrangement algorithm \ref{alg:Rearrangement} converges to a fixed point in a finite number of iterations. Thus, it suffices to show that every fixed point of the algorithm is locally optimal.
  Let $\{\phi_{i}\}_{i=1}^k$ be a fixed point of the rearrangement algorithm and let 
$\psi_i$ denote the first (normalized, positive) eigenvector of the operator $\Delta + \alpha(1-\phi_{i})$.  The Fr\'echet derivative of $\Lambda_k^\alpha \colon \mathcal{A}^{k} \to \mathbb R$ in the direction $\{ \delta \phi_i \}_{i=1}^k$ is written
\begin{align}
\label{eq:Frechet}
\Big\langle \frac{\delta \Lambda_k^\alpha}{\delta \{ \phi \}} , \{ \delta \phi_i \} \Big\rangle 
= -\alpha \sum_{i} \langle \psi_i^2, \delta\phi_{i} \rangle. 
\end{align}
For $\{\phi_{i}\}_{i=1}^k \in \mathcal A_k$, any admissible perturbation can be written 
$$
\tilde{\phi_{i}} = \phi_{i} + \sum_{v\in V} t_{i,v}\chi_{\{v\}}, \qquad i=1,\ldots,k
$$
for constants $t_{i,v}$, such that for every $v\in V, \ \sum_{i=1}^k t_{i,v} = 0$ and
$$
t_{i,v} \begin{cases} \geq 0 &  \text{if }\phi_{i}(v) = 0 \\
\leq 0 & \text{if } \phi_{i}(v) = 1.
\end{cases} 
$$
 Using \eqref{eq:Frechet}, we compute 
 \begin{align*}
 \Big\langle \frac{\delta \Lambda_k^\alpha}{\delta \{ \phi \}} , \{ \delta \phi_i \} \Big\rangle 
= -\alpha\displaystyle\sum_{v}\sum_{i} t_{i,v} \psi_i^2(v) 
  \geq -\alpha\displaystyle\sum_{v}\sum_{i} t_{i,v}\psi^2_{i^{*}(v)}(v) = 0
\end{align*}
where $i^{*}(v) = \arg\max_{i}\psi_{i}(v)$.  This proves local optimality.  
\end{proof}

\begin{figure}[t!]
\begin{center}
\includegraphics[height=6cm]{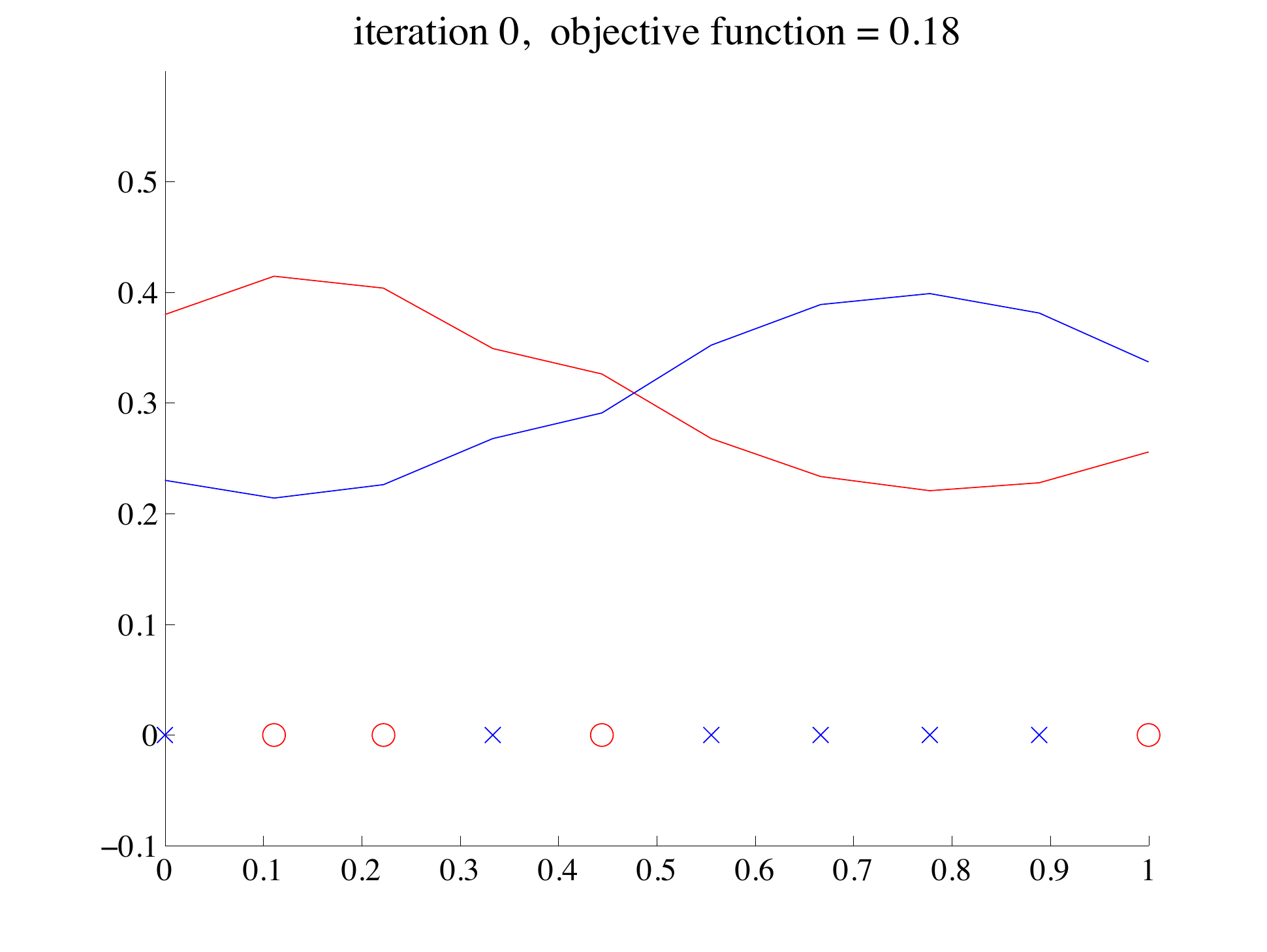}
\includegraphics[height=6cm]{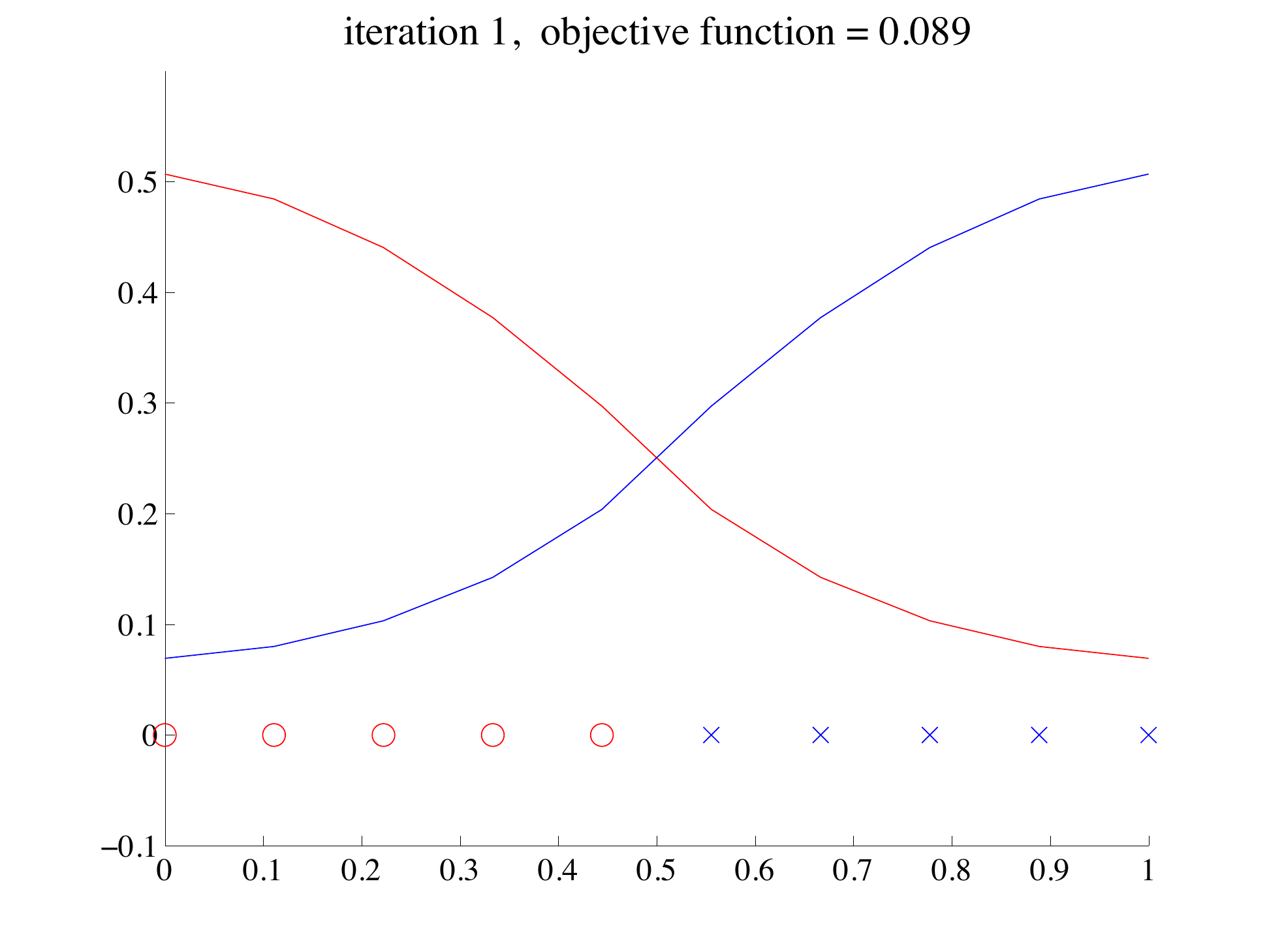}
\caption{An illustration of the rearrangement algorithm \ref{alg:Rearrangement} for $k=2$ on a  line graph with $n=10$ vertices,  an equally weighted nearest-neighbor similarity matrix, and graph Laplacian with $r=0$.
{\bf (left)}  The algorithm is randomly initialized as indicated by the blue \texttt{x} and red \texttt{o}. The eigenfunctions corresponding to the potential given by the indicator function on each partition is also indicated in the same color. 
{\bf (right)} The algorithm converges to the globally optimal stationary state on the right in one iteration. See \S \ref{sec:rel}.}
\label{fig:Rearr}
\end{center}
\end{figure}

\begin{remark} We refer to algorithm \ref{alg:Rearrangement} as a rearrangement algorithm since at each iteration, the vertex functions $\{ \phi_i \}$ are rearranged to  decrease \eqref{eq:relaxDirPart}. These types of methods were introduced by  Schwarz and  Steiner and have wide applications in variational problems \cite{kawohl2000}. For example, Steiner rearrangement can be used to prove the isoperimetric inequality that the  ball is the minimal perimeter domain amongst all regions of equal measure. More recently, rearrangement algorithms have been used in eigenvalue optimization problems  including Krein's problem: Given an open, bounded connected domain $\Omega \subset \mathbb R^2$ and a prescribed amount of two materials of different density, find the distribution which minimizes the smallest frequency of the clamped drum \cite{Cox1991,Chanillo2000,Kao2008,Kao2012}.  

Algorithm \ref{alg:Rearrangement}  also shares many attributes with the Merriman, Bence, and Osher (MBO) algorithm for approximating the motion by mean curvature \cite{MBO1992,MBO1993,mbo2012,garcia2013,esedoglu2013,Gennip2013}.   
\end{remark}

\subsection{Advice on $\alpha$ and $r$}\label{alpha_ad}

All machine learning algorithms require user input in various forms.  For the rearrangement algorithm to find a meaningful optimum, the parameter $\alpha$ must be chosen carefully.  Lemma \ref{lem:Conv} might suggest to the reader that taking $\alpha$ large is a good idea; however, this is not the case.  Large values of $\alpha$ force the associated eigenvectors to localize immediately, and the rearrangement algorithm terminates.  We make this precise with Lemma \ref{lem:big_alpha} below.  Moreover, comparing partition quality for different values of $\alpha$ is not possible; as was noted earlier, a consequence of the proof of Lemma \ref{lem:Conv} is that the relaxed energy is monotonic in $\alpha$. 

\begin{lemma}\label{lem:big_alpha} Consider any partition of $V$ consisting entirely of connected sets, $V = \displaystyle\amalg_{i=1}^{k}V_{i}$.  Then there exists $\alpha$ sufficiently large so that this partition is locally optimal for $\Lambda_{k}^{\alpha,*}$.
\end{lemma}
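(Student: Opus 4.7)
The plan is to show that, for $\alpha$ large enough, the collection of indicator functions $\{\chi_{V_i}\}_{i=1}^k$ is a fixed point of the rearrangement algorithm; by Theorem \ref{thm:rearr}, every fixed point is a local minimizer of $\Lambda_k^\alpha$, so local optimality will follow.

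To verify the fixed-point condition, I would argue as follows. For each $i$, let $\psi_i^\alpha$ denote the positive normalized eigenvector associated with $\lambda^\alpha(\chi_{V_i})$. By Lemma \ref{lem:Conv}, as $\alpha \to \infty$ we have $\psi_i^\alpha \to \psi_i^D$, where $\psi_i^D$ is the positive normalized Dirichlet eigenvector on $V_i$, extended by zero outside $V_i$. Because $V_i$ is connected, the Perron--Frobenius argument cited in the footnote after \eqref{eq:RelaxedExplicitEig} (applied to the submatrix of $\Delta$ indexed by $V_i$) guarantees that $\psi_i^D$ is \emph{strictly positive} on every vertex of $V_i$ and zero on $V_i^c$. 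In particular, for each $v \in V_i$ and each $j \neq i$,
\[
\psi_i^D(v) > 0 = \psi_j^D(v).
\]

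Since $V$ is finite, I can combine these strict inequalities with the pointwise convergence $\psi_i^\alpha \to \psi_i^D$ to choose a single threshold $\alpha_0$ such that, for every $\alpha \geq \alpha_0$, every vertex $v$, and every $j \neq i(v)$ (where $i(v)$ is the index of the part containing $v$),
\[
\psi_{i(v)}^\alpha(v) > \psi_j^\alpha(v).
\]
This is exactly the statement that the argmax rule in Algorithm \ref{alg:Rearrangement} re-assigns every vertex to its current label, so $\{\chi_{V_i}\}_{i=1}^k$ is a fixed point of the rearrangement algorithm. Theorem \ref{thm:rearr} then delivers local optimality.

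The main obstacle is the non-uniform nature of the convergence as $\alpha \to \infty$: the eigenvectors $\psi_i^\alpha$ associated with different partition components need not converge at comparable rates, and the gap $\psi_{i(v)}^\alpha(v) - \psi_j^\alpha(v)$ could in principle shrink before it grows. However, because the graph is finite, we only need to dominate finitely many such gaps, each of which is strictly positive in the limit; a single $\alpha_0$ obtained as the maximum of the finitely many thresholds therefore suffices. Connectedness of each $V_i$ is essential here --- without it $\psi_i^D$ could vanish on part of $V_i$, and the strict inequality needed to get a unique argmax would fail at those vertices.
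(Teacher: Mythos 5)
Your proposal is correct and follows essentially the same route as the paper: show that for large $\alpha$ the partition is a fixed point of the rearrangement algorithm by combining the convergence $\psi_i^\alpha \to \psi_i^D$ from Lemma \ref{lem:Conv} with strict positivity of the Dirichlet eigenvector on each connected component (the paper's Lemma \ref{lem:d_pos}), and then invoke Theorem \ref{thm:rearr}. The only cosmetic difference is that you obtain strict positivity by applying Perron--Frobenius directly to the submatrix indexed by $V_i$, while the paper derives it via a limiting and iteration argument; your explicit handling of the finitely many thresholds is a welcome bit of extra care.
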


We first require the following lemma, which has a well-known analogue in the continuous setting.  

\begin{lemma}\label{lem:d_pos}
For any connected $S\subseteq V$, the first Dirichlet eigenvector, $\psi^{D}(S)$, attaining the minimum in \eqref{eq:lam}, is strictly positive on $S$.
\end{lemma}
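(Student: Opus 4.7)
The plan is to proceed in two stages: first establish that one may take $\psi^D(S) \ge 0$ on $S$, and then use the eigenvalue equation together with the connectedness of $S$ to upgrade non-negativity to strict positivity.

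For the first stage, I would exploit the Rayleigh-quotient characterization in \eqref{eq:lam}. Given any minimizer $\psi$, replacing it by $|\psi|$ preserves the normalization $\|\,|\psi|\,\|_S = \|\psi\|_S = 1$ and the support condition $|\psi||_{S^c}=0$, while the elementary inequality $(|\psi_i|-|\psi_j|)^2 \le (\psi_i-\psi_j)^2$ shows that $\|\nabla|\psi|\|_{w,E}^2 \le \|\nabla\psi\|_{w,E}^2 = \lambda(S)$. By optimality, equality must hold, so $|\psi|$ is itself a minimizer and thus a Dirichlet eigenvector for $\lambda(S)$. We may therefore assume $\psi = \psi^D(S) \ge 0$ from the outset.

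For the second stage I would argue by contradiction: suppose $Z := \{v \in S : \psi(v)=0\}$ is nonempty, and let $P := S \setminus Z$, which is also nonempty because $\|\psi\|_S = 1$. Since the subgraph induced on $S$ is connected and $\{Z,P\}$ is a nontrivial partition of $S$, there is an edge $(v,u)\in E$ with $v\in Z$ and $u\in P$. Evaluating \eqref{eq:explcitEigEq} at $v$ and using $\psi(v)=0$ together with the formula $\Delta_r = D^{-r}(D-W)$ yields
\begin{equation*}
0 = \lambda(S)\,\psi(v) = (\Delta_r\psi)(v) = -d_v^{-r}\sum_{j\,:\,(v,j)\in E} w_{vj}\,\psi(j).
\end{equation*}
Since $\psi\ge 0$, $d_v^{-r}>0$, and each $w_{vj}>0$ along an edge, every neighbor $j$ of $v$ must satisfy $\psi(j)=0$; in particular $\psi(u)=0$, contradicting $u\in P$. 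Hence $Z = \emptyset$ and $\psi^D(S) > 0$ on $S$.

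I expect the bookkeeping step to be the only subtle point: ensuring that the connectedness hypothesis really does force a $Z$-to-$P$ edge in $E$ (as opposed to via a path passing through $S^c$), and being explicit that contributions from $j \in S^c$ in the eigenvalue equation vanish automatically because $\psi$ is zero there. Neither is a genuine obstacle, just a place to be careful; the Perron–Frobenius flavor of the footnote after \eqref{eq:RelaxedExplicitEig} gives an alternative route (apply Perron–Frobenius to $\beta\,\mathrm{Id}-\Delta_r|_S$ for large $\beta$, using that the induced subgraph on $S$ is irreducible), but the direct argument above seems cleaner and avoids having to re-symmetrize $\Delta_r$ against the weighted inner product $\|\cdot\|_S$.
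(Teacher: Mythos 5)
Your proof is correct, and the core of it---evaluating the eigenvalue equation at a putative zero of $\psi$ and propagating the zero across an edge using non-negativity and connectedness---is essentially the same as the second half of the paper's argument. Where you differ is in how non-negativity is established: the paper obtains $\psi^D \geq 0$ by invoking Lemma \ref{lem:Conv} to write $\psi^D$ as a limit of the penalized eigenvectors $\psi^\alpha$, each of which is positive by Perron--Frobenius, whereas you use the variational characterization \eqref{eq:lam} directly, noting that $|\psi|$ is admissible with no larger Rayleigh quotient and hence is itself a minimizer. Your route is more self-contained (it does not lean on the convergence lemma) and it handles general $r\in[0,1]$ in one pass, since the prefactor $d_v^{-r}>0$ is sign-irrelevant; the paper instead reduces to $r=0$ and appeals to a positivity-preserving transformation. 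Your formulation of the propagation step via the partition $S = Z \amalg P$ and a crossing edge is also slightly cleaner than the paper's ``iterate this argument'' phrasing, and your flagged caveat---that connectedness of $S$ must mean connectedness of the induced subgraph, so that the $Z$-to-$P$ edge lies in $E$ with both endpoints in $S$---is exactly the right thing to be careful about; it is implicit in the paper as well.
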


\begin{proof} We assume $r=0$ so that $\Delta = D - W$ in this calculation, but the result holds for all $r\in[0,1]$ by applying a transformation to the eigenvector which preserves positivity. 
By Lemma \ref{lem:Conv},  $\psi^{\alpha} \rightarrow \psi^{D}$.  Moreover, using the Perron-Frobenius Theorem we see that $\psi^{D}\geq 0$.  Assume $\psi^{D}(i) = 0$ for some $i\in S$.  Then by definition we have
$$
\Delta\psi^{D}(i) =\lambda^{D}(S)\psi^{D}(i) = 0 \qquad  
\implies \qquad   \sum_{j} w_{ij} (- \psi^{D}(j)) = 0. 
$$
Since the left-hand side summation consists entirely of non-positive terms and using the connectedness of $S$, we must have at least one $w_{ij} > 0$ implying $\psi^{D}(j) = 0 $;  iterating this argument implies $\psi^{D} \equiv 0$, which is a contradiction.
\end{proof}

\begin{proof}[Proof of Lemma \ref{lem:big_alpha}] Now, consider any partition of $V$ consisting entirely of connected sets, $V = \displaystyle\amalg_{i=1}^{k}V_{i}$.  After possibly passing to a subsequence we know $\psi^{\alpha}_i \rightarrow \psi^{D}_i$ pointwise, and thus we can choose $\alpha$ large enough so that $\psi^{\alpha}_i(v) = \max_j\{\psi^{\alpha}_j(v)\}$ for all $v\in V_i$ and for all $i$.  This implies that this partition results in a stationary point of the rearrangement algorithm for this choice of $\alpha$ large, which proves local optimality by Theorem \ref{thm:rearr}.
\end{proof}

Conversely, taking $\alpha$ too small also has undesirable consequences.  We find experimentally that for $\alpha$ chosen too small, partitions can be lost.  By considering the decomposition 
$$
\Delta + \alpha (1-\chi_S) = \sum_i \lambda_i \psi_i \psi_i^t + \alpha \sum_{i\in S} \delta_i \delta_i^t,
$$
where $\delta_i$ indicates the Kronecker-delta function on vertex $i$, 
a convincing heuristic is to pick $\alpha$ on the scale of the eigenvalues of the graph Laplacian.  Following this line of thought, in most of our numerical examples (see \S\ref{sec:num}) we pick $\alpha$ to be approximately $k\lambda_{2}$, where $\lambda_{2}$ is the second smallest eigenvalue of the graph Laplacian.  We hope to make this heuristic more precise in future work.

Another choice lies in determining the parameter $r$ in the definition of the graph Laplacian, $\Delta_r = D^{-r}(D-W)$; see \eqref{eq:explcitEigEq}. We note that, as in spectral clustering, the choice of $r$ determines the inner product on the vertices which in turn defines the volume of vertex subsets; $r=0$ corresponds to cardinality whereas $r=1$ corresponds to degree-weighted volume. 
For the former case, spectral clustering heuristically minimizes RatioCut yielding approximately equal cardinality components and for the latter case, spectral clustering heuristically minimizes NCut yielding approximately equal volume components \cite{Luxburg:2007}.  We expect and have observed from numerical experiments that the same intuition applies for our model.

\subsection{A semi-supervised extension}\label{semi} 

In many clustering applications, a small percentage of the data labels are known and thus it is desirable for a clustering algorithm to have a \emph{semi-supervised extension} that allows for the incorporation of such information. The rearrangement algorithm \ref{alg:Rearrangement} 
 has a natural semi-supervised variant. The label membership of a subset of the points can be fixed in the algorithm and the reader may check that all proofs of convergence remain valid. 
Moreover, fixing these points will force the eigenvectors to `spread out' accordingly.  We apply this variant to the MNIST handwritten digit data in Section \ref{sec:mnist}.

\section{Numerical experiments} \label{sec:num}
In this section, we apply our graph partitioning method to solve \eqref{eq:relaxDirPart} for several example graphs. The rearrangement algorithm requires the computation of the ground state of the eigensystem  \eqref{eq:RelaxedExplicitEig}. 
In \eqref{eq:RelaxedExplicitEig}, the eigenvector $\psi$ is normalized according to $\psi^t D^r \psi = 1$.  A standard eigenvalue problem can be realized by the change of variables, $\eta = D^{r/2} \psi$, and is written 
$$
 [D^{1-r} - D^{-r/2} W D^{-r/2} + \alpha (1-\phi)  ] \  \eta = \lambda \  \eta, \qquad \eta^t \eta = 1.
$$ 
The rearrangement algorithm (Algorithm \ref{alg:Rearrangement}) was implemented in Matlab 7.12 using sparse matrices and the \verb+eigs+ command.  In each case, the Ritz estimate residual convergence criteria was set using the Matlab option $\verb+tol+ = 10^{-4}$.

\begin{figure}[t!]
\begin{center}
\includegraphics[height=7.2cm]{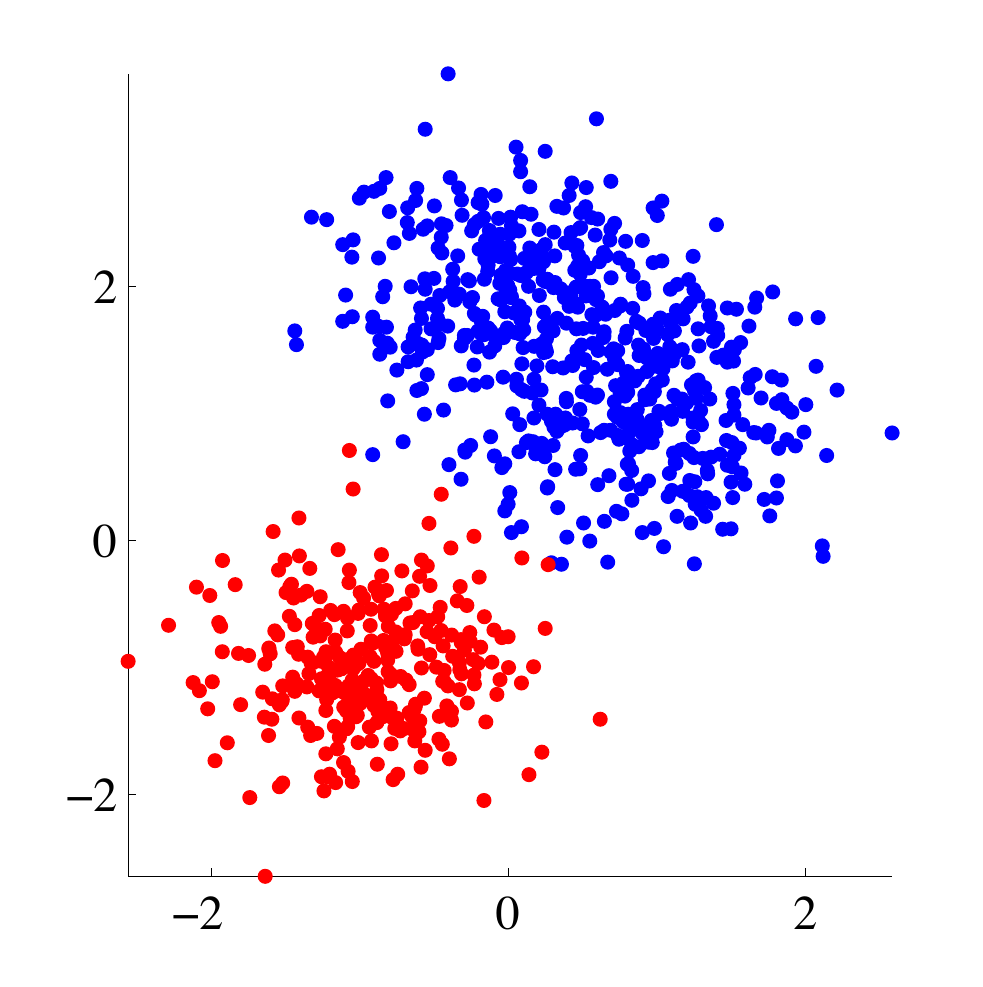}
\includegraphics[height=7.2cm]{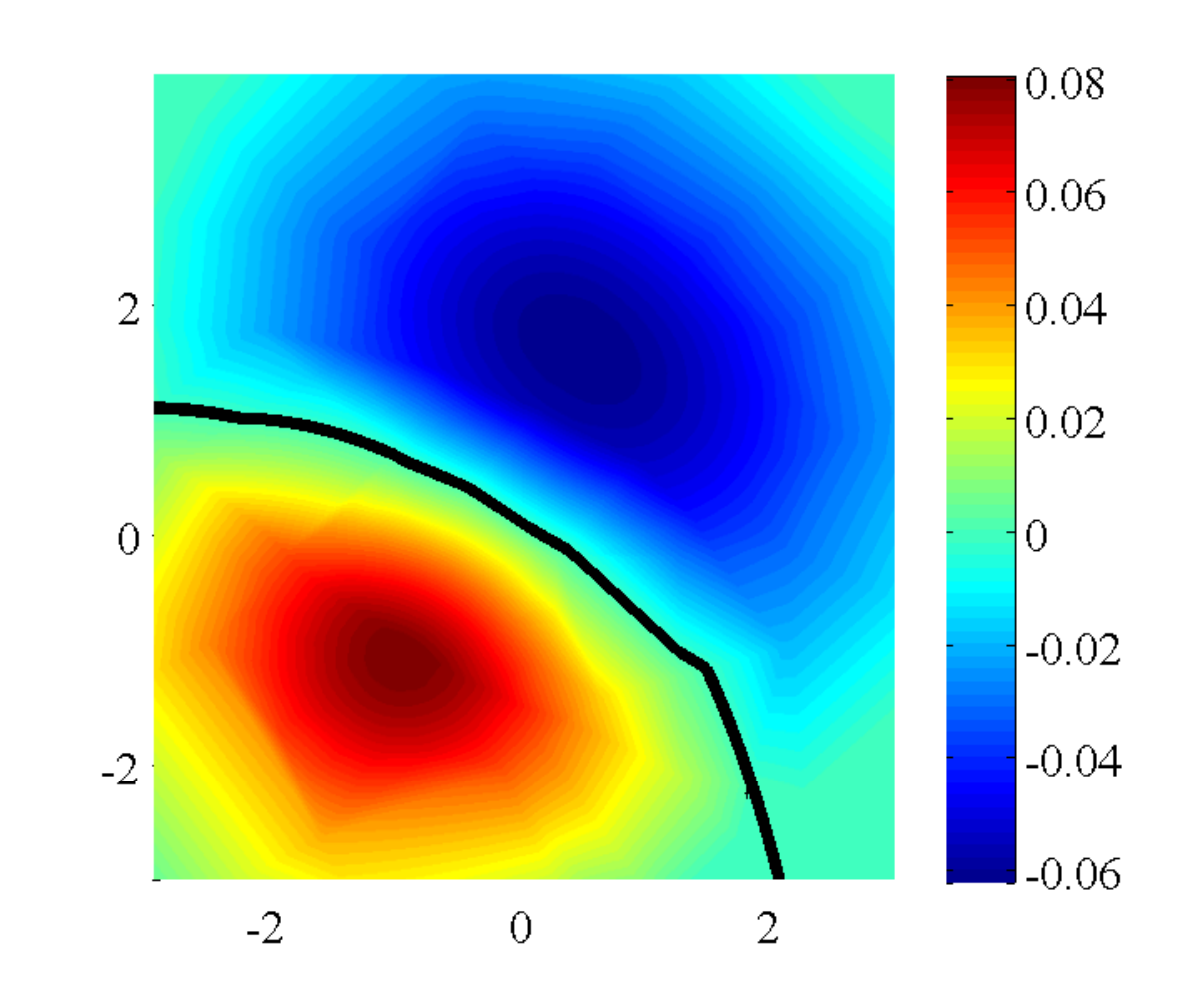}
\caption{{\bf (left)} The outputted partition of the rearrangement algorithm \ref{alg:Rearrangement}. 
{\bf (right)} An interpolated contour plot of $\psi_{1} -\psi_{2}$ in the feature space.  The black line represents the decision boundary for clustering a new datapoint. See \S \ref{sec:GMM}.}
\label{fig:rear}
\end{center}
\end{figure}

\subsection{Gaussian mixture model} \label{sec:GMM}
As a simple illustration, we first consider a Gaussian Mixture Model with two Gaussian clouds of differing sizes as illustrated in the left panel of Figure \ref{fig:rear}.  To construct the similarity matrix, we used a standard Gaussian kernel, $e^{-d^{2}(x,y)/\sigma^{2}}$, with $\sigma = 1$.  We choose the graph Laplacian with $r=1$ for this experiment and the algorithm is randomly initialized.  
We choose $\alpha = 2\lambda_{2}$, where $\lambda_{2}$ is the second eigenvalue for the graph Laplacian. The algorithm converges to the global optimum in 5 iterations. 
In the left panel of Figure \ref{fig:rear} we plot the optimal partition. 

In the two-dimensional feature space for the model, we interpolate the values of the eigenvectors $\psi_i$ corresponding to each partition and plot the  difference $\psi_{1} - \psi_{2}$ in the right panel. 
The eigenvectors concentrate where the clusters are most dense; this is in contrast to the $r=0$ case, where  the eigenvectors tend to be flat.  The values of the eigenvectors can be interpreted as a confidence for the labeling of a particular vertex.  The two points given by $x^* = \arg\max\psi_{1}$ and $z^*=\arg\max\psi_{2}$ are thus ``good representatives'' for the two clusters in that they have the highest confidence.  Moreover, the  contour plot in the right panel of Figure \ref{fig:rear} illustrates the supervised extension of the algorithm discussed in \S\ref{semi}. Namely, if we wish to cluster a new datapoint $z\in \mathbb R^2$, we can use interpolated values for the eigenvectors to assign it to the cluster $i = \arg\max_{j}\psi_{j}(z)$. The decision boundary for this assignment is given by the black line in Figure \ref{fig:rear}.

\begin{figure}[t!]
\begin{center}
\includegraphics[width=8.1cm]{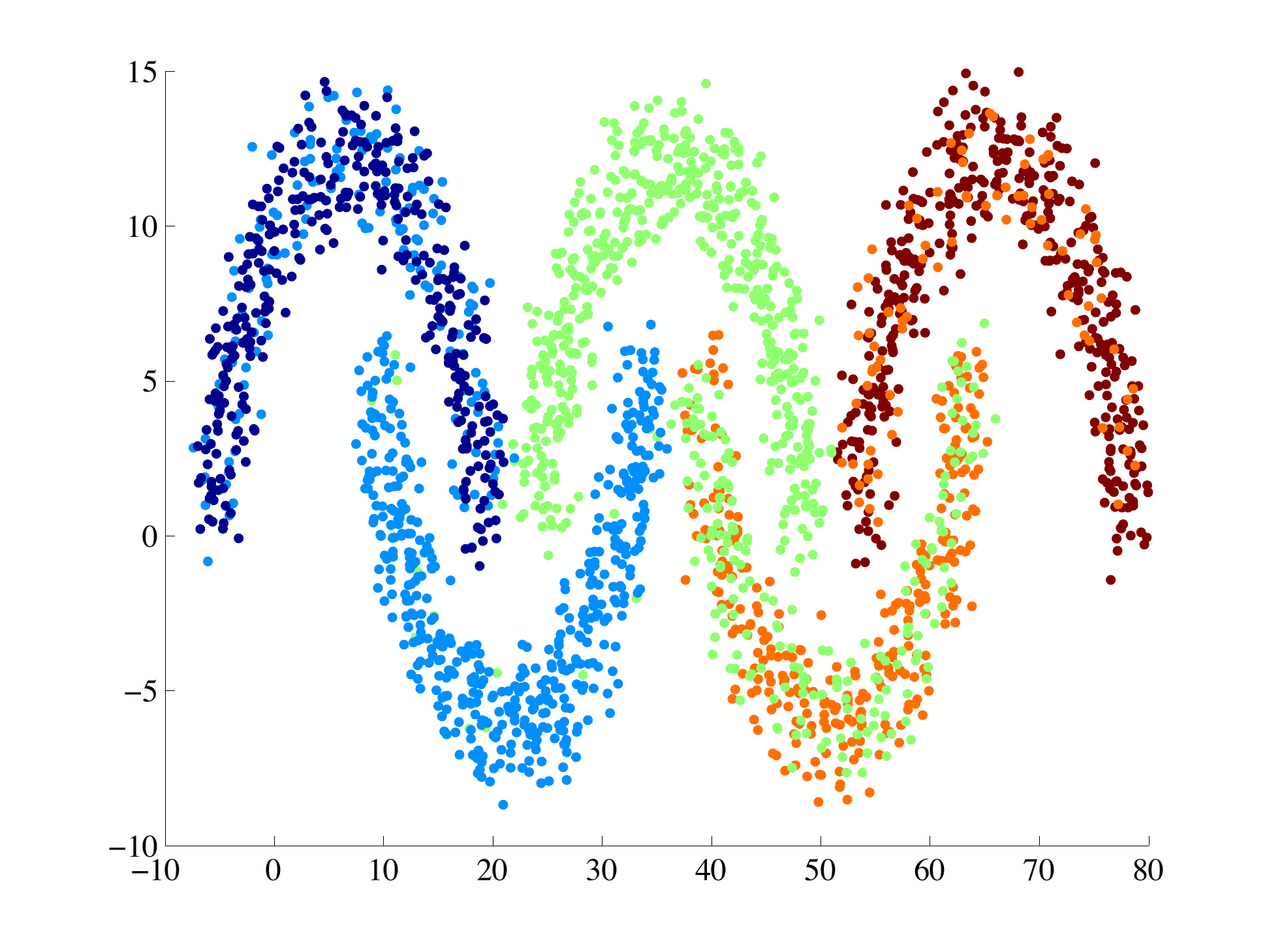}
\includegraphics[width=8.1cm]{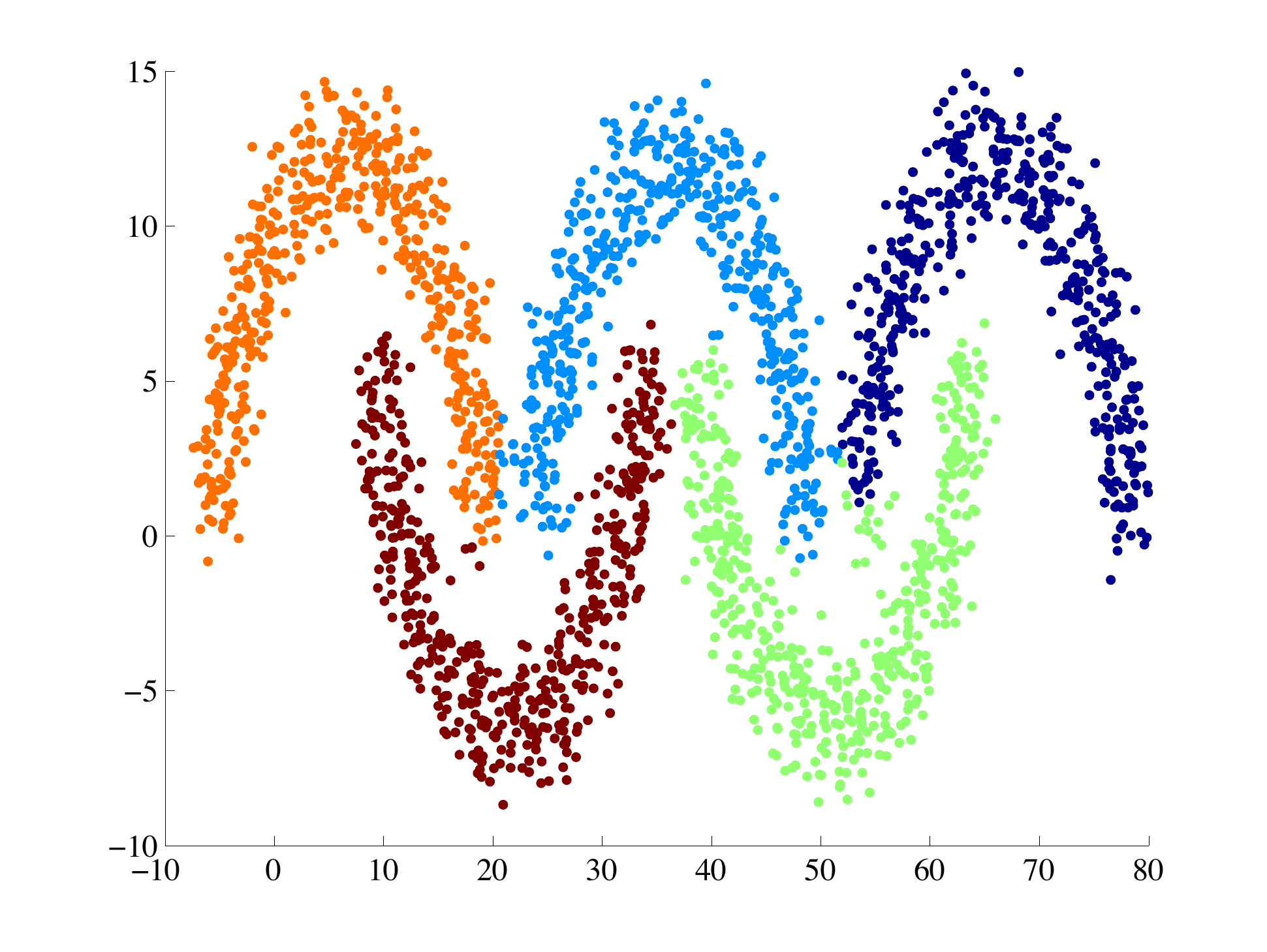}
\caption{{\bf (left)} The lowest energy partition of a five moons dataset obtained via the normalized spectral clustering method from 50 random initializations of $k$-means. 
{\bf (right)} The lowest energy partition of the same five moons dataset after ten iterations of the rearrangement algorithm from 10 random initializations.  The spectral clustering method has significantly lower purity than our method. See \S \ref{sec:3moons}.}
\label{fig:3moons}
\end{center}
\end{figure}

\subsection{Five  moons} \label{sec:3moons}
Next we consider the five moons partitioning problem. We construct a similarity matrix using a Gaussian kernel, $e^{-d^{2}(x,y)/\sigma^{2}}$, with $\sigma = 1$, and the Laplacian is constructed using $r=1$.  We choose $\alpha = 5\lambda_{2}$, where $\lambda_{2}$ is the second eigenvalue for the graph Laplacian. For ten different random initializations, we run the algorithm until convergence and choose the lowest energy partition. 
In Figure \ref{fig:3moons}(right), we give a scatter plot of the points where colors represent the  labels  assigned to each point by the algorithm. 
For eight of the ten initializations, the algorithm generated a partition very similar to that shown in Fig. \ref{fig:3moons}. For each initialization, the algorithm converges in approximately 10 iterations. 
As a comparison, we also use the popular normalized spectral clustering method \cite{Shi2000} to obtain the partition in Figure \ref{fig:3moons}(left). The partition in the figure is the  lowest energy partition obtained from 50 random initializations of $k$-means.

\subsection{Several small datasets} \label{sec:smallData}
We obtained the similarity matrices for twelve  small datasets from the website of Z. Yang  \cite{Yang2012,ZhirongYangWebpage},  to which we refer the reader for a complete description and source information. We  apply   the rearrangement algorithm to each dataset  using 20 random initializations with $\alpha = k \lambda_2$ and $r=0$. 
 In the following table, we report the size of each dataset $n$, the desired number of clusters $k$, the  purity corresponding to the lowest energy partition,  a comparison value for the purity, the average number of iterations required for convergence of the rearrangement algorithm, the smallest objective function value obtained, and the objective function value of the ground truth labels.  
 Comparison values for the purity are taken to be the best purity obtained by a comparison of ten different methods   \cite[Table 1]{Yang2012}. It should be noted that no single method obtained the comparison purity values.  
 We remind the reader that each iteration of the algorithm  involves the computation of  the ground state of $k$ $n\times n$ standard eigenvalue problems; for these relatively small scale problems the computational costs associated with this algorithm are minimal.  We observe that the found objective value is always smaller than the ground truth objective value. This demonstrates   that the non-convexity of the problem is \emph{not} preventing the algorithm from finding a good minimizer.

\begin{center}
\begin{tabular}{c |c c c c  c c c}
&&&&purity &avg. & found &ground truth \\
Dataset & $n$ & $k$ & purity & comp. & iterations &  obj. &  obj. \\
\hline
STRIKE & 24 & 3 & 0.96 & 1.00 & 3.9 & 0.362 & 0.367 \\ 
AMLALL & 38 & 3 & 0.92 & 0.92 & 4.5 & 1.68 & 1.732 \\ 
DUKE & 44 & 2 & 0.52 & 0.70 & 5.2 & 0.549 & 1.019 \\ 
KHAN & 83 & 4 & 0.57 & 0.60 & 4.8 & 1.37 & 2.160 \\ 
POLBOOKS & 105 & 3 & 0.81 & 0.83  & 6.0 & 0.975 & 1.291 \\ 
CANCER & 198 & 14 & 0.53 & 0.54  & 5.1 & 13.3 & 16.441 \\ 
SPECT & 267 & 3 & 0.79 & 0.79 & 8.8 & 0.768 & 1.759 \\ 
ROSETTA & 300 & 5 & 0.77 & 0.77 & 7.8 & 5.62 & 12.482 \\ 
ECOLI & 327 & 5 & 0.80 & 0.83 & 7.0 & 0.568 & 0.656 \\ 
IONOSPHERE & 351 & 2 & 0.70 & 0.70 & 6.5 & 0.119 & 0.205 \\ 
DIABETES & 768 & 2 & 0.65 & 0.65 & 3.0 & 0.00552 & 0.013 \\ 
ALPHADIGS & 1404 & 36 & 0.46 & 0.51 &  12.9 & 37.5 & 56.503 
\end{tabular}
\end{center}

\subsection{MNIST handwritten digits} \label{sec:mnist}
The MNIST handwritten digit dataset consists of 70,000 $28\times28$ greyscale images of handwritten digits 0 to 9.  
As input we used the similarity matrix for the MNIST dataset obtained from the website of Z. Yang \cite{Yang2012,ZhirongYangWebpage}. 
We symmetrize this matrix via $\tilde{W}_{ij} = \max\{W_{ij},W_{ji}\}$, take $r=0$, and set $\alpha = 10\lambda_{2}$.  Moreover, we randomly sampled 3\% of the data and used the semi-supervised variant of our algorithm (see \S \ref{semi}). The remaining initialized labels were assigned randomly.   

For ten different random initializations, we run the algorithm until convergence and choose the lowest energy partition. In each case, the algorithm converges in approximately 20 iterations. The purity obtained, as defined in  \cite{Yang2012}, is 0.961 which is comparable to the performance of state-of-the-art clustering algorithms. 
We note that the partitions identified for other initial configurations had  similar energy and purity values. 
Figure \ref{fig:mnist_fig} is a graphical display of the quality of the output.  On the left-hand side are the representative images for each cluster (where each eigenvector achieves its maximum), and on the right are the averaged images within each cluster. In general, the maximum value of the eigenvectors may be non-unique, but for this dataset, the maximum was unique. In Table \ref{table:nonlin}, we display the confusion matrix for the obtained clusters. The columns represent the ground truth labels and the rows represent the labels assigned by the proposed algorithm. Each column sums to one and represents the distribution of the true labels across our partitions.

\begin{figure}[t!]
\begin{center}
\includegraphics[height=6cm]{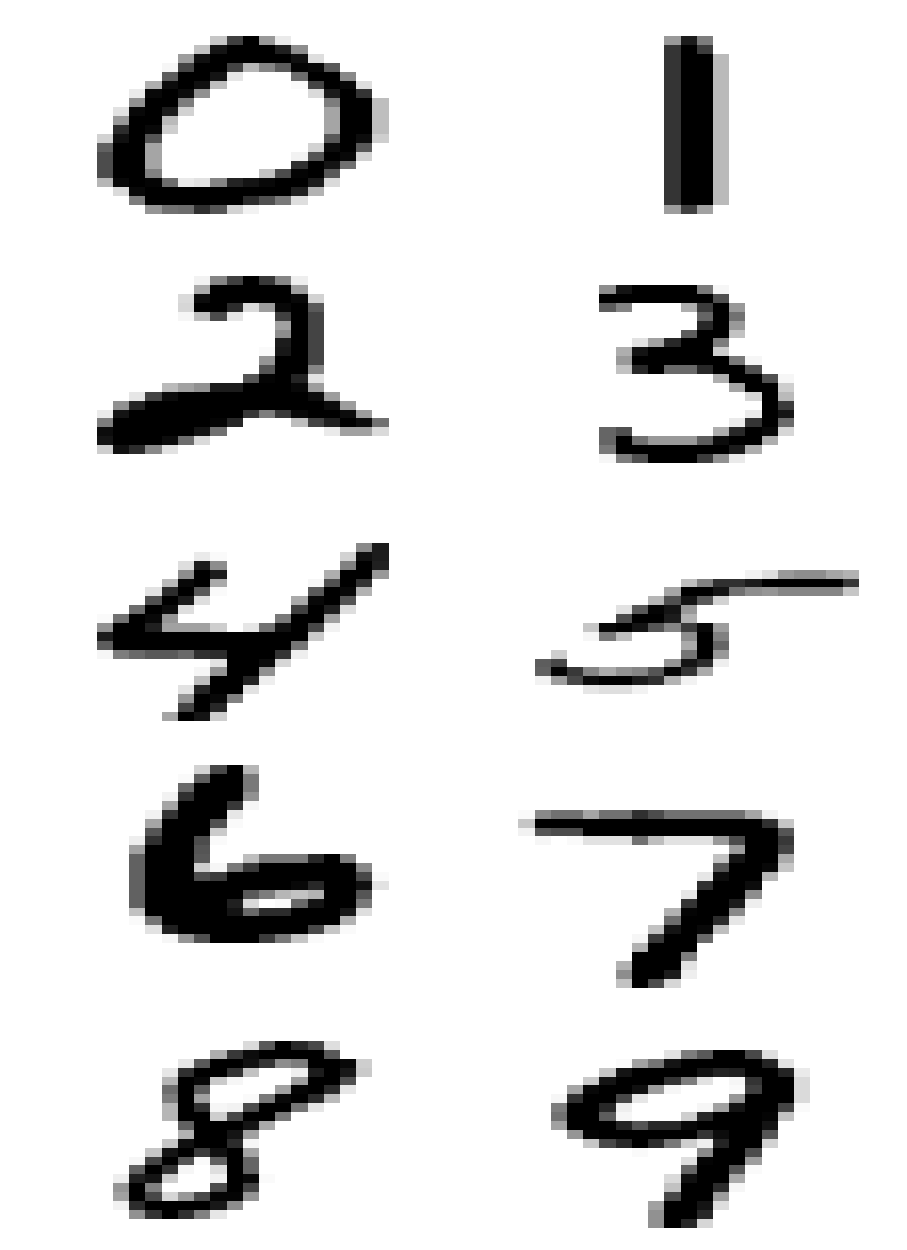} \qquad \qquad \qquad
\includegraphics[height=6cm]{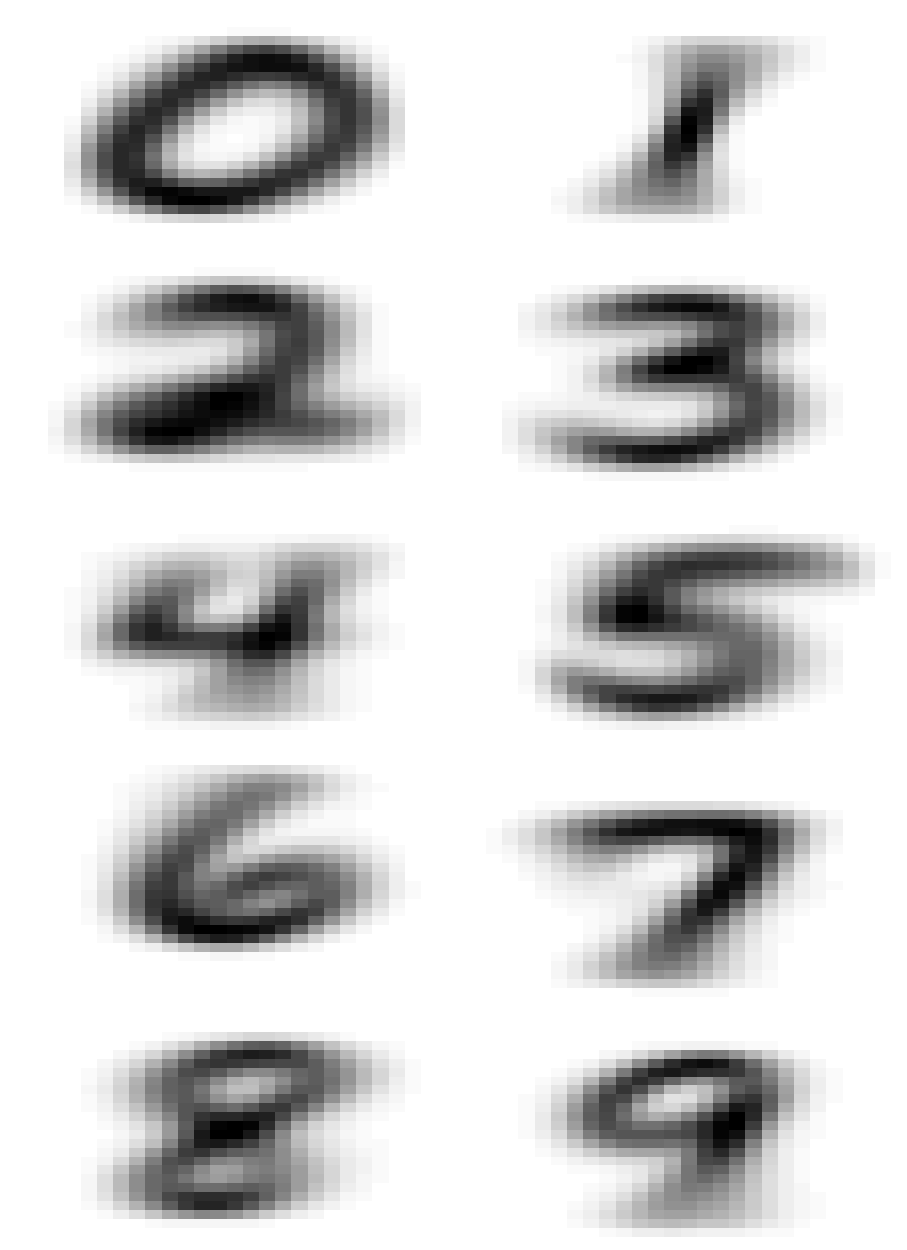}
\caption{MNIST handwritten digits; each image is $28\times28$ pixels. 
{\bf (left)} Representative images for each of the $k=10$ clusters. 
 {\bf (right)} The cluster means. See \S \ref{sec:mnist}.}
\label{fig:mnist_fig}
\end{center}
\end{figure}

\begin{table}[t!]
\centering 
{\small 
\begin{tabular}{c | c c c c c c c c c c} 
\ & 0 & 1 & 2 & 3 & 4 & 5 & 6 & 7 & 8 & 9 \\
\hline 
0 & \textbf{0.9864} & 0.0147 & 0.0021 & 0.0008 & 0.0002 & 0.0006 & 0.0005 & 0.0009 & 0.0040 & 0.0000\\
1 & 0.0012 & \textbf{0.9717} & 0.0023 & 0.0006 & 0.0011 & 0.0012 & 0.0047 & 0.0046 & 0.0013 & 0.0012\\ 
2 & 0.0012 & 0.0047 & \textbf{0.9537} & 0.0000 & 0.0079 & 0.0001 & 0.0041 & 0.0076 & 0.0054 & 0.0003\\ 
3 & 0.0005 & 0.0004 & 0.0000 & \textbf{0.9894} & 0.0002 & 0.0045 & 0.0009 & 0.0001 & 0.0332 & 0.0001\\ 
4 & 0.0000 & 0.0003 & 0.0213 & 0.0002 & \textbf{0.9736} & 0.0060 & 0.0003 & 0.0076 & 0.0045 & 0.0010\\ 
5 & 0.0018 & 0.0001 & 0.0000 & 0.0005 & 0.0032 & \textbf{0.9811} & 0.0000 & 0.0029 & 0.0000 & 0.0009\\ 
6 & 0.0038 & 0.0065 & 0.0003 & 0.0027 & 0.0006 & 0.0000 & \textbf{0.9792} & 0.0009 & 0.0992 & 0.0001\\ 
7 & 0.0033 & 0.0008 & 0.0033 & 0.0029 & 0.0112 & 0.0035 & 0.0011 & \textbf{0.9675} & 0.0080 & 0.0014\\ 
8 & 0.0013 & 0.0003 & 0.0168 & 0.0027 & 0.0011 & 0.0004 & 0.0090 & 0.0054 & \textbf{0.8440} & 0.0017\\ 
9 & 0.0005 & 0.0004 & 0.0001 & 0.0003 & 0.0008 & 0.0026 & 0.0002 & 0.0024 & 0.0004 & \textbf{0.9932}\\
\end{tabular} } 
\label{table:nonlin} 
\caption{{\bf MNIST confusion matrix.} The columns represent the ground truth labels and the rows represent the labels assigned by the proposed algorithm. Each column sums to one and represents the distribution of the true labels across our partitions.  We see that the algorithm does very well, but confused some 8s for 6s. See \S \ref{sec:mnist}.} 
\end{table}

Finally, we explore the real-time computational costs of the rearrangement algorithm as well as the effect of using partially  labelled data. 
 We  apply   the rearrangement algorithm to the MNIST dataset  using 10 random initializations with $\alpha = 10 \lambda_2$ and an increasing percentage of labelled data points. 
 For each percentage of labels, we report 
 the purity corresponding to the lowest energy partition, 
 the average  number of iterations (across  initializations) required for convergence of the rearrangement algorithm, 
 the average clock-time for convergence, and 
 the smallest objective function value obtained. 
 The objective function value for the ground truth labels is 1.6051.  We observe that the average convergence times are greater than those reported in \cite[Section 5]{Bresson2013}, however the Dirichlet energy partitions contain additional geometric information.  We also observe that for a typical random initialization, the purity is already 80-85\% by the second iteration of the algorithm. All computational times reported below  were obtained on a 2.67 GHz Intel Xeon  desktop computer with 48GB of RAM.

\begin{center}
{\bf MNIST dataset, $n=70,000$, $k=10$} 

\vspace{.2cm}

\begin{tabular}{r | c c c c}
labels & purity & avg. time (s) & avg. iterations  & found obj.   \\
\hline
 1\% & 0.8683 & 115.09 & 29.2 & 1.398\\
 2.5\% & 0.9619 & 51.15 & 11.4 & 1.563\\
 5\% &  0.9702 & 57.27 & 15.3 & 1.565 \\ 
  10\% & 0.9711 & 48.22 & 11.5 & 1.567 \\ 
\end{tabular}
\end{center}

\subsection{Torus} \label{sec:tori}
We consider a $120\times120$ square grid with periodic boundary conditions and 
construct the nearest-neighbor $r=0$ graph Laplacian. This is precisely the finite difference approximation of the Laplacian for the flat torus. 
We initialize the partition with a Voronoi tessellation for  randomly chosen generators.  In Fig. \ref{fig:tori}, we plot locally optimal partitions with the smallest  energy for $k=9$, 16, and 25.
We observe that the partitions are a hexagonal tiling as conjectured by  \cite{Cafferelli2007} and also computationally studied in \cite{Bourdin2010}.

\begin{figure}[t!]
\begin{center}
\includegraphics[width=5cm]{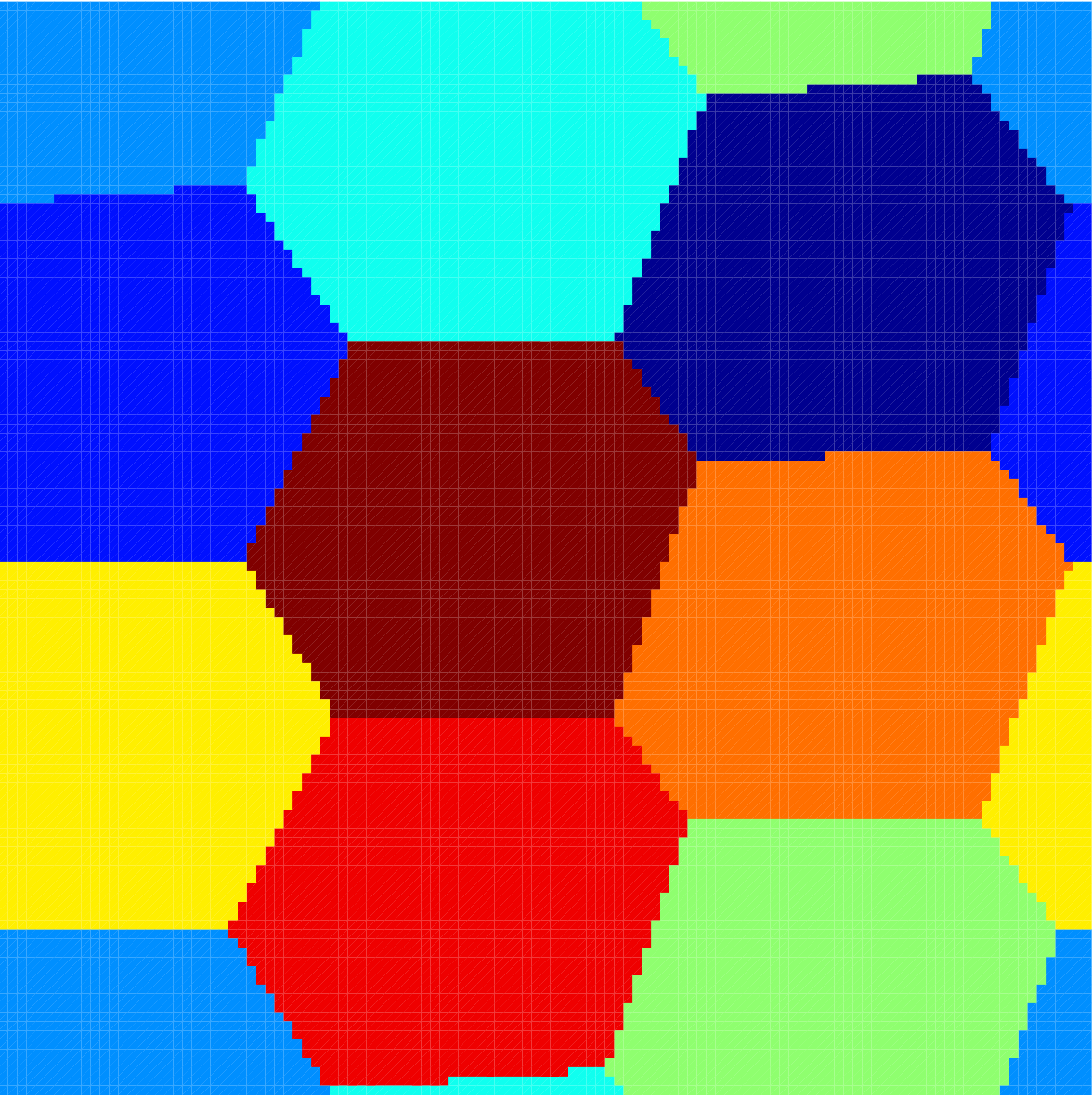}
\includegraphics[width=5cm]{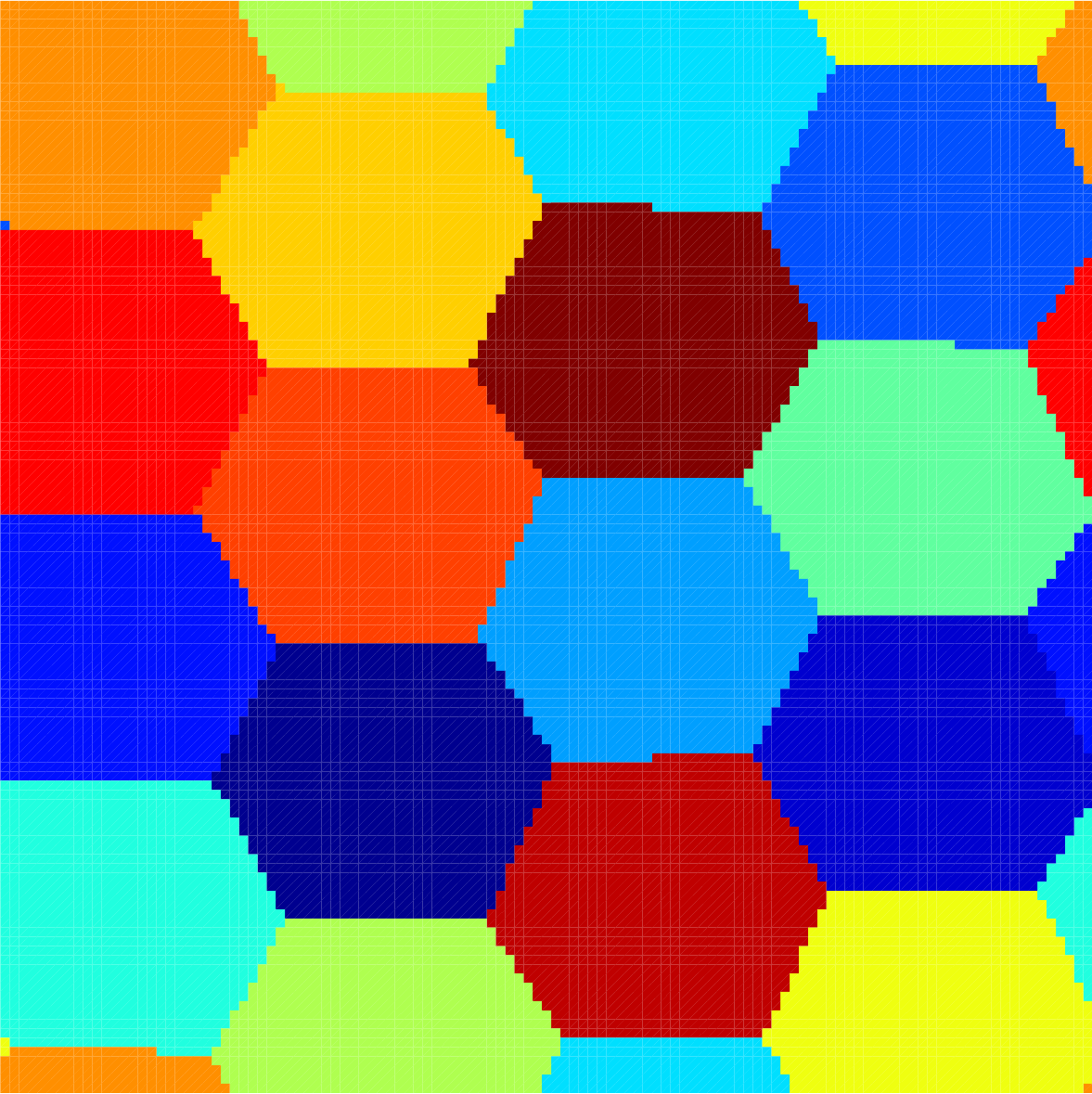}
\includegraphics[width=5cm]{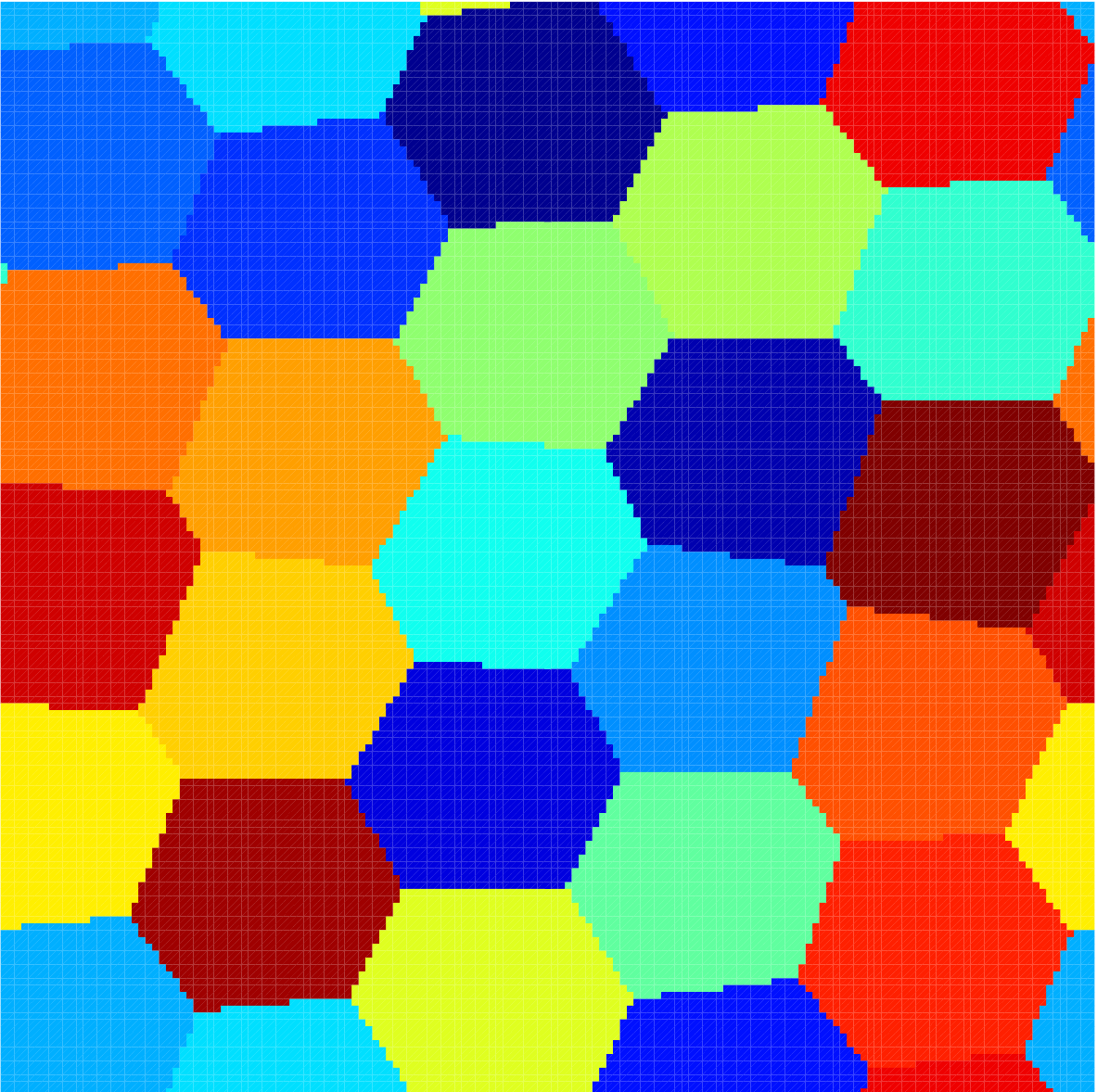}
\caption{Locally optimal $k=9$, 16, and 25 partitions of a flat torus. See \S \ref{sec:tori}.}
\label{fig:tori}
\end{center}
\end{figure}

\subsection{Sphere} \label{sec:sphere}
An approximately uniform  triangular   tessellation  of the sphere with $n=4000$ points is generated by minimizing the Reisz s-energy\footnote{\url{http://www.mathworks.com/matlabcentral/fileexchange/37004-uniform-sampling-of-a-sphere}}.  Using the geodesic distance, we use a standard Gaussian kernel $e^{-d^{2}(x,y)/\sigma^{2}}$ with $\sigma^2 = 1/10$ to construct the similarity matrix. The $r=0$ graph Laplacian is used for partitioning.  We choose $k=3$ and initialize the partition with a Voronoi tessellation for 5 instances of  randomly chosen generators. We choose $\alpha = 3 \lambda_{2}$ and the algorithm converges in roughly 13 iterations. In Fig. \ref{fig:yPart}, we plot the lowest energy partition. The conjectured Y-partition is attained; see \cite{Helffer2010}.

\begin{figure}[t!]
\begin{center}
\includegraphics[width=13cm]{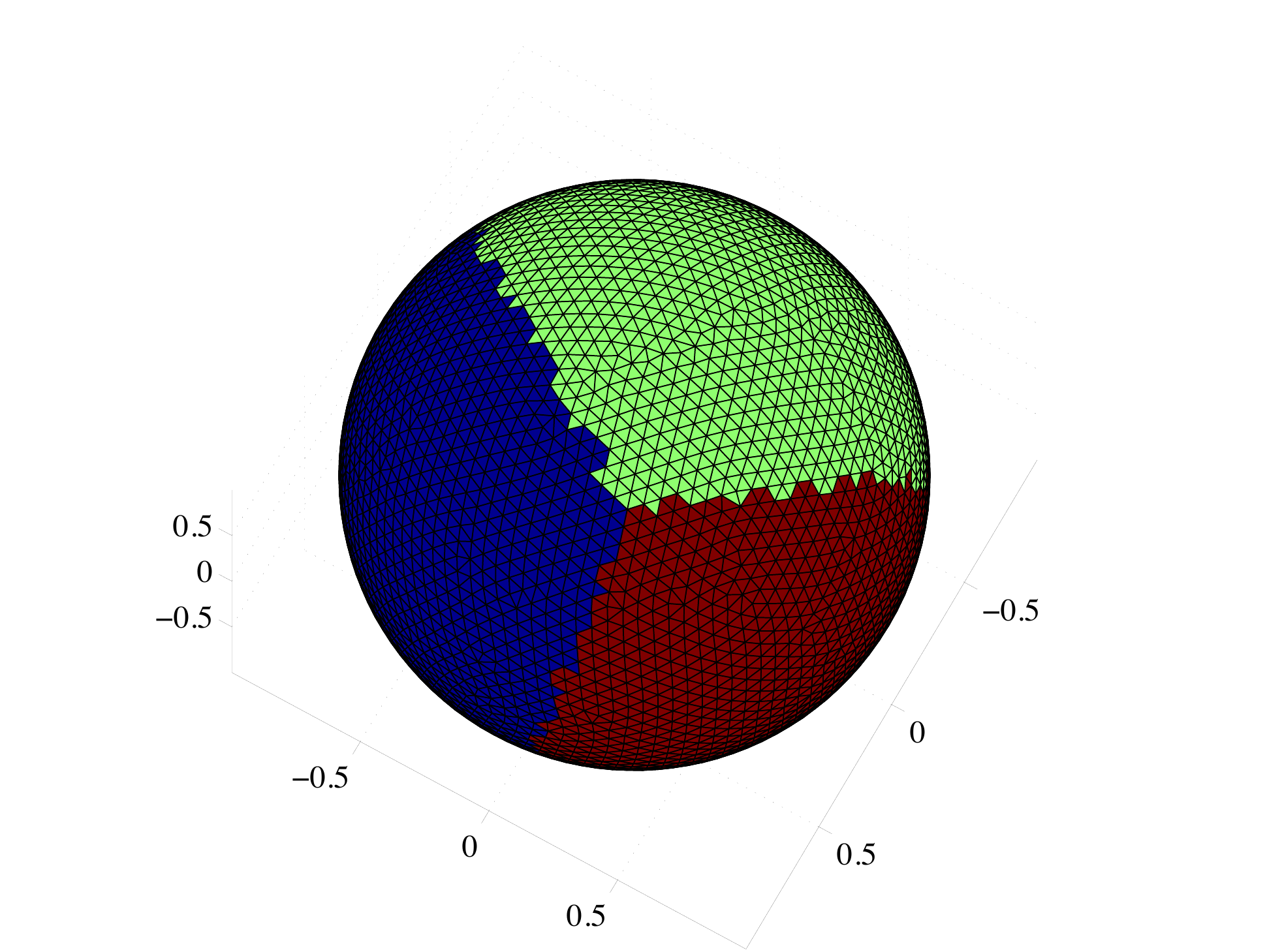}
\caption{A Y-partition of the sphere obtained via the rearrangement algorithm. See \S \ref{sec:sphere}.}
\label{fig:yPart}
\end{center}
\end{figure}

\section{Discussion and further directions} \label{sec:disc}

In this paper we introduced a new non-convex partitioning objective based on minimizing the sum of the Dirichlet eigenvalues of the partition components and constructed an algorithm for minimizing a relaxed version of this objective.  We believe this model is promising for many reasons; in particular, it does not rely on minimizing a functional of perimeter and thus is able to take the interior geometry of the partition components into account.  
Moreover, our algorithm naturally provides confidences for label assignments and 
consequently produces a representative for each cluster.
  Our model was motivated by an analogous continuous problem (\ref{eq:ContPart});  we have extended many well-established facts from the continuum to an arbitrary graph and, in some cases, proven considerably  more.  Many interesting research directions remain---perhaps most ambitiously, we wonder if it is possible to prove convergence (in the sense of Gromov-Hausdorff) of the discretized manifold graph partitions to their continuum limit.  We would also like to find a tractable relaxation of the similar Neumann boundary conditions problem, and analyze its properties. 

Another more immediate direction of research is in the numerical analysis of the rearrangement algorithm \ref{alg:Rearrangement}. The majority of the computational costs for the rearrangement algorithm are in the eigenvalue computations. Our implementation  could possibly be improved via a multi grid approach  \cite{Bucur1998}, the  Nystr\"om method \cite{Fowlkes2001}, the Rayleigh--Chebyshev method \cite{anderson2010}, or parallelization \cite{Oudet2011}.  In \S \ref{alpha_ad} we discuss optimal choices of the parameter $\alpha$, and believe optimal choices should be on the scale of the second eigenvalue of the graph Laplacian.  Another user-directed choice lies in which graph Laplacian to choose; we have found that optimal choices here are highly application dependent, and also dependent on the sparsity level of the given graph.  In a future work, we intend to analyze the numerics further, and will do a more systematic comparison against other algorithms, both unsupervised and semi-supervised.

\subsection*{Acknowledgements} 
We would  like to thank Andrea Bertozzi, James von Brecht, Dorin Bucur, Thomas Laurent, Stan Osher, Marc Sigelle, and David Uminsky for helpful conversations. Braxton Osting is supported in part by a National Science Foundation (NSF) Postdoctoral Fellowship DMS-1103959. Chris White was supported by AFOSR MURI grant FA9550-10-1-0569, ONR grant N000141210838 and ONR grant N000141210040; he would like to thank Rachel Ward for her support and Andrea Bertozzi for her hospitality in inviting him to UCLA.  
\'Edouard Oudet gratefully acknowledges the support of the ANR, through the projects GEOMETRYA and OPTIFORM.

{\small 
\bibliographystyle{amsalpha}
\bibliography{graphPartition.bib}}

\end{document}